\newtheorem{theo}{Theorem}
\newtheorem{prop}{Proposition}
\newtheorem{lemma}{Lemma}
\newtheorem{cor}{Corollary}
\newcommand{\R}{\mathbb{R}}	
\newcommand{\eps}{\varepsilon}	
\newcommand{\pa}{\partial}		
\newcommand{\Div}{\textrm{div}\,}	
\newcommand{\na}{\nabla}		
\def\dive{\textnormal{div}}
\newcommand{\IRd}{\int_{\R^3}}
\newcommand{\IIRd}{\iint_{\R^3\times\R^3}}
\newcommand{\mass}{\|u_0\|_{L^1}}
\title{A review for an isotropic Landau model}
\author{Maria Gualdani and Nicola Zamponi}
\address{Department of Mathematics, George Washington University, 801 22nd Street NW, 20052 Washington DC, USA. }
\email{gualdani@gwu.edu}
\address{Institute for Analysis and Scientific Computing, Vienna University of  
	Technology, Wiedner Hauptstra\ss e 8--10, 1040 Wien, Austria.}
\email{nicola.zamponi@tuwien.ac.at}
\date{\today}
\begin{document}

\thanks{MPG is supported by NSF DMS-1514761. MPG would like to thank NCTS Mathematics Division Taipei for their kind hospitality. 
NZ acknowledges support from the Austrian Science Fund (FWF), grants P22108, P24304, W1245.} 

\begin{abstract}
%
%
%
We consider the equation 
$$
u_t  = \Div(a[u]\na u - u\na a[u]),\qquad -\Delta a = u.
$$
This model has attracted some attention in the recents years and several results are available in the literature. We review recent results on existence and smoothness of solutions and explain the open problems. 
\end{abstract}

\maketitle

\pagestyle{headings}		

\markboth{A review for an isotropic Landau model}{M. Gualdani, N. Zamponi}

\section{Introduction} 

\subsection{The isotropic Landau equation}
In this manuscript we review recent results on the isotropic Landau equation
\begin{equation}\label{landau}
\begin{array}{ll}
               u_t  = \Div(a[u]\na u - u\na a[u]),\qquad -\Delta a = u\qquad \mbox{in }\R^3,~~ t>0, \\
               u(\cdot,0)  =u_0.
             \end{array} 
\end{equation}

This problem has been extensively studied in the recent years. Due to its similarity to the semilinear heat equation, to the Keller-Segel model but mostly to the Landau, the analysis of existence, uniqueness and regularity of solutions to (\ref{landau}) is a very interesting problem. A modification of (\ref{landau}) was first introduced in \cite{KS,GKS}; there the authors studied existence and regularity of bounded radially symmetric and monotone decreasing solutions to 
$$
u_t  =a[u]\Delta u + \alpha u^2,  \quad \alpha \in \left(0,\frac{74}{75}\right).
$$

Existence of global bounded solutions for (\ref{landau}) has been proven in \cite{GG} when initial data are radially symmetric and monotone decreasing. Section \ref{rad_symme} explains these results more in details. Existence of weak solutions for even initial data has been shown in \cite{GZ}. See Section \ref{even_sol} for more details. 

For general initial data the problem of global existence of regular solutions is still open. The main obstacles for the analysis are hidden in the quadratic non-linearity: expanding the divergence term one can formally rewrite (\ref{landau}) as 
$$
u_t  =a[u]\Delta u + u^2. 
$$
This problem is reminiscent to the semilinear heat equation, which solutions become unbounded after a finite time  \cite{GK}.

\subsection{ Conserved quantities and entropy structure. } \label{subsec_entropy}
In this section we collect some properties of (\ref{landau}). The isotropic Landau equation shares some of the conservation properties of the classical Landau and Boltzmann equation. We first note that potential $a[u]$ can be expressed as
$$ a(x,t) = \IRd\frac{u(y,t)}{4\pi|x-y|}dy,\qquad x\in\R^3,~~t>0, $$
and therefore (\ref{landau}) can also be written as
\begin{align}
& u_t = \Div\IRd\frac{u(y)\na u(x) - u(x)\na u(y)}{4\pi|x-y|}dy.
\end{align}
With this in mind let us define the Maxwell-Boltzmann entropy:
\begin{align}
H[u] \equiv\IRd u \log u\, dx. \label{H}
\end{align}
The function $t\in(0,\infty)\mapsto H[u(t)]\in\R$ is nonincreasing in time: using \eqref{landau} we can write the entropy production as 
\begin{align*}
-4\pi\frac{d}{dt}H[u] &= \IIRd\frac{\na u(x)}{u(x)}\cdot\frac{u(y)\na u(x) - u(x)\na u(y)}{|x-y|}dx dy\\
&= \IIRd\frac{u(x)u(y)}{|x-y|}\frac{\na u(x)}{u(x)}\cdot\left( \frac{\na u(x)}{u(x)} - \frac{\na u(y)}{u(y)} \right)dx dy\\
&= \frac{1}{2}\IIRd\frac{u(x)u(y)}{|x-y|}\left| \frac{\na u(x)}{u(x)} - \frac{\na u(y)}{u(y)} \right|^2 dx dy\geq 0.
\end{align*}
Clearly $\IRd u(x,t)dx = \IRd u_0(x) dx$, $t>0$. We can say something about the first and second order moments of $u$.
From \eqref{landau} it follows
\begin{align*}
4\pi\frac{d}{dt}\IRd x u(x,t)dx = -\IIRd\frac{u(y)\na u(x) - u(x)\na u(y)}{|x-y|}dx dy = 0
\end{align*}
for obvious symmetry reasons. So the first moment is conserved. As for the second moment 
\begin{align*}
4\pi\frac{d}{dt}\IRd\frac{|x|^2}{2}u(x,t)dx &=  -\IIRd x\cdot\frac{u(y)\na u(x) - u(x)\na u(y)}{|x-y|}dx dy \\
&= \IIRd y\cdot\frac{u(y)\na u(x) - u(x)\na u(y)}{|x-y|}dx dy \\
&= -\frac{1}{2}\IIRd\frac{x-y}{|x-y|}(u(y)\na u(x) - u(x)\na u(y))dx dy . 
\end{align*}
Since
$$ \Div_x \frac{x-y}{|x-y|} = -\Div_y \frac{x-y}{|x-y|} = \left(\Div_z \frac{z}{|z|}\right)_{\big\vert_{z=x-y}} = \frac{2}{|x-y|} ,$$
integration by parts yields
\begin{align}\label{mom.2}
\frac{d}{dt}\IRd\frac{|x|^2}{2}u(x,t)dx = \frac{1}{2\pi}\IIRd\frac{u(x,t)u(y,t)}{|x-y|} dx dy = 2\IRd u(x,t)a(x,t)dx>0 .
\end{align}
This is one of the main differences to the classical Landau equation. The second moment increases with time and a bound is not given a-priori. We will see in Section \ref{even_sol} how to find this bound when the initial data are even.

\section{Radially symmetric solutions}\label{rad_symme}
Problem (\ref{landau}) is well understood when initial data are radially symmetric and monotonically decreasing. In \cite{GG} the authors prove the following theorem: 
\begin{theo}\label{theorem_rad}
Let $u_0$ be a nonnegative function that has finite mass, energy and entropy. Moreover let $u_0$ be  radially symmetric, monotonically decreasing and such that $u_0 \in L^p_{weak}$ for some $p>6$. Then there exists a function $u(x,t)$ smooth, positive and bounded for all time which solves 
$$
u_t = a[u] \Delta u + u^2, \quad u(x,0)=u_0.
$$
\end{theo}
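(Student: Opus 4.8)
We outline the plan of proof. It follows the standard scheme for global classical solutions of a quasilinear parabolic equation — regularize, derive a priori estimates that are uniform in the regularization, pass to the limit, and bootstrap to smoothness — with all the difficulty concentrated in a single estimate: a global-in-time $L^\infty$ bound on $u$. The radial, monotone structure is exactly what brings this bound within reach.

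\emph{Step 1: regularization and preliminary bounds.} I would replace \eqref{landau} by a family of regularized problems with parameter $\tau>0$: mollify the datum to a smooth $u_0^{(\tau)}\ge 0$, still radial and nonincreasing, with mass, energy, entropy and $L^p_{weak}$--norm bounded uniformly in $\tau$; replace the diffusion coefficient $a[u]$ by $a[u]+\tau$; truncate the quadratic source at a high level; and solve on a ball $B_{1/\tau}$ with no--flux boundary conditions, extending $u$ by zero to define $a$ as its Newtonian potential on $\R^3$. Each such problem is uniformly parabolic with smooth data, hence has a unique global smooth solution $\utau$, radial by rotational invariance. One first checks that \emph{monotonicity is preserved}: $v=\pa_r\utau$ solves a linear parabolic equation, with $v=0$ on $\{r=0\}$ and, thanks to the no--flux condition and $a_r\le 0$, with $v\le 0$ on $\{r=1/\tau\}$; since $v\le 0$ at $t=0$, the maximum principle gives $v\le 0$ for all $t$. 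This is the first place where care is needed, because the coefficients of the $v$--equation involve $a[\utau]$ nonlocally. Next, mass conservation, the entropy dissipation of Section~\ref{subsec_entropy}, and the energy identity \eqref{mom.2} all persist (with the obvious modifications), giving $\tau$--uniform bounds on $\|\utau(t)\|_{L^1}$, on $H[\utau(t)]$, and, on finite time intervals, on $\IRd|x|^2\utau\,dx$. Finally, monotonicity yields pointwise control of the potential: by Newton's theorem, for any radial nonincreasing $u\ge 0$,
\[
\frac{r^2}{3}\,u(r)\ \le\ \frac{1}{4\pi r}\int_{B_r}u\,dy\ \le\ a[u](r)\ \le\ a[u](0)=\frac{1}{4\pi}\IRd\frac{u(y)}{|y|}\,dy ,
\]
and $a[u](0)$ is finite, bounded above uniformly because $u$ has finite mass and lies in $L^p_{weak}$ with $p>6$, and bounded \emph{below} uniformly in terms of the mass and the energy. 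Thus the regularized equation is, on every fixed compact set, uniformly parabolic with coefficient bounded above and below; it degenerates only at spatial infinity, and the diffusion is \emph{stronger where $u$ is larger}.

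\emph{Step 2: the $L^\infty$ bound — the heart of the matter.} Evaluating the equation at the origin, where $\utau$ is maximal, gives only $\pa_t\utau(0)\le\utau(0)^2$ because $\Delta\utau(0)\le 0$, which is merely finite--time control; one must quantify how strongly the diffusion counteracts the quadratic source. Testing $\pa_t u=\Div(a\na u-u\na a)$ against $q\,u^{q-1}$ and using $-\Delta a=u$ yields, for $q\ge 2$,
\[
\frac{d}{dt}\IRd u^q\,dx+\frac{4(q-1)}{q}\IRd a\,\bigl|\na u^{q/2}\bigr|^2\,dx=(q-1)\IRd u^{q+1}\,dx ,
\]
so the task is to absorb the supercritical right--hand side into the weighted dissipation on the left. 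Using $a(r)\gtrsim r^2 u(r)$ one rewrites the dissipation as controlling $\IRd r^2\bigl|\na u^{(q+1)/2}\bigr|^2\,dx$, a weighted homogeneous Sobolev norm of $u^{(q+1)/2}$; a radial Sobolev inequality (with an effective dimension shifted by the weight) then embeds it into a higher Lebesgue space, while the bound $a(r)\gtrsim\frac{1}{4\pi r}\int_{B_r}u$ and the $L^p_{weak}$ tail with $p>6$ handle the region at infinity where $a$ is small. Interpolating with the mass bound makes $\IRd u^{q+1}\,dx$ absorbable for $q$ above a threshold, and iterating over levels in the spirit of De~Giorgi — or, equivalently, dominating $u$ by a carefully built radial nonincreasing supersolution — produces $\|\utau(t)\|_{L^\infty}\le C(t)$ with $C$ independent of $\tau$ and finite for every $t\in[0,\infty)$. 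I expect this absorption step to be the main obstacle: reconciling the different weights ($r^2u$ in the bulk, $1/r$ at infinity), keeping the constants under control along the iteration, and matching the starting integrability — which is exactly what forces the hypotheses $p>6$ and monotone decrease.

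\emph{Step 3: passage to the limit and smoothness.} With the uniform $L^\infty$ bound in hand, $a[\utau]$ is bounded in $L^\infty$ and, by elliptic regularity, in $C^\alpha_{\mathrm{loc}}$, uniformly in $\tau$; the energy identity with $q=2$ then gives a uniform $L^2_{\mathrm{loc}}$ bound on $\na\utau$. Standard compactness produces a limit $u\ge 0$, radial and nonincreasing, solving \eqref{landau} — equivalently $u_t=a[u]\Delta u+u^2$ — in the weak sense, with the source truncation inactive on the time interval where the $L^\infty$ bound holds, i.e.\ on $[0,\infty)$. On this limit the equation is, on every compact set, a linear uniformly parabolic equation for $u$ with Hölder coefficient $a[u]$ and bounded right--hand side $u^2$, so parabolic Schauder estimates and a bootstrap upgrade $u$ to $C^\infty$ in space and time for $t>0$. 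Strict positivity follows from the strong maximum principle — the source $u^2\ge 0$ only helps — and passing the conserved and monotone quantities to the limit completes the proof.
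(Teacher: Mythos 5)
Your scaffolding (regularize, uniform a priori bounds, compactness, bootstrap) is reasonable and the $L^q$ energy identity you write in Step~2 is correct, but the mechanism you propose for the key integrability estimate is different from the paper's, and you pass over the paper's central device without engaging with it. The paper's ``main observation'' is a barrier argument built precisely to circumvent the failure of the \emph{nonlinear} comparison principle (which, as the paper notes, fails because $a[u_1]\Delta u_1\le a[u_2]\Delta u_2$ need not hold even when $u_1\le u_2$). The trick is to freeze the nonlocal coefficient at the actual solution, $a=a[u]$, and compare $u$ with a time-independent radial function $g\in L^p$ for some $p>3/2$ chosen so that $u_0<g$ and $a[u]\Delta g+ug<0$: then $u$ and $g$ are respectively a solution and a supersolution of the \emph{linear} parabolic equation $v_t=a[u]\Delta v+uv$, and the linear comparison principle yields $u\le g$ for all $t>0$. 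This gives $u\in L^p$, $p>3/2$ (exactly the threshold that makes $a[u]$ bounded), and then Stampacchia-type iteration and parabolic regularity finish. Your Step~2 instead tries to go straight from mass/energy/entropy and $L^p_{weak}$ ($p>6$) to $L^\infty$ via a weighted-Sobolev/De Giorgi iteration driven by $a(r)\gtrsim r^2u(r)$ near the origin and $a(r)\gtrsim 1/r$ at infinity. You mention ``dominating $u$ by a carefully built radial nonincreasing supersolution'' as if it were equivalent to the iteration, but the equivalence is exactly the subtle point: a supersolution of the nonlinear equation is useless here (no comparison), and a supersolution of the linearized equation only helps once you realize you may freeze $a[u]$ and $u$ as coefficients — that linearization step is the paper's contribution, and it is also what converts the problem into producing a fixed, time-independent barrier $g$ valid for all $t$. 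Your iteration route may well be viable, but you yourself flag the absorption step as ``the main obstacle'', and reconciling the two weight regimes while keeping iteration constants summable is exactly where such arguments typically break; as written this step is a gap, not a proof. In short: same skeleton, genuinely different (and not fully worked-out) core; the barrier-for-the-linearized-equation device is the idea you should be supplying in Step~2.
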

We briefly highlight the ideas behind the proof of Theorem \ref{theorem_rad}. The non-local dependence on the coefficients prevents the equation to satisfy comparison principle: in fact given two functions $u_1$ and $u_2$ such that $u_1 < u_2$ for $t<t_0$ and $u_1=u_2$ at $(x_0, t_0)$ we definitely have that 
$\Delta u_1(x_0,t_0) \le  \Delta u_2(x_0,t_0)$ and $a[u_1](x_0,t_0)  \le a[u_2](x_0,t_0)$. However it is not necessarily true $a[u_1](x_0,t_0) \Delta u_1(x_0,t_0) \le a[u_2](x_0,t_0) \Delta u_2(x_0,t_0)$.  To overcome this shortcoming, the main observation in \cite{GG} is that if one proves the existence of a function $g(x)\in L^p$ for some $p>3/2$ such that $u_{0} < g$ and 
$$
a[u]\Delta g + ug <0, $$ 
then comparison principle for the linearized problem implies $u \le g $ for all $t>0$. Once higher integrability $L^p$ of $u$ is proved, standard techniques for parabolic equation such as Stampacchia's theorem yield $L^\infty$ bound for $u(x,t)$ and consequent regularity.

\section{Even initial data} \label{even_sol}
Existence of weak solutions for (\ref{landau}) with general initial data is still an open problem.  As already mentioned at the end of Section \ref{subsec_entropy}, the first obstacle that one encounters in the analysis of (\ref{landau}) is the missing bound for the second moment. This bound is essential when one seeks a-priori estimates for the gradient. In \cite{GZ} the authors overcame this problem when solutions are even. In this section we highlight the basic estimates of \cite{GZ} that will lead to construction of weak even solutions. For weak solutions we mean functions $u(x,t)$ such that
\begin{align*}
& \sqrt{u} \in L^2\left(0,T; H^1\left(\R^3,\frac{dx}{1+|x|}\right)\right),\qquad u, u\log u\in L^{\infty}(0,T; L^{1}(\R^{3})),\\
& a\in L^\infty(0,T; L^3_{loc}(\R^3)),\qquad \na a\in L^\infty(0,T; L^{3/2}_{loc}(\R^3)),
\end{align*}
that satisfy the following weak formulation 
\begin{align*}
&\int_0^T\langle \pa_{t}u\, , \phi\,\rangle dt + \int_0^T\int_{\R^3}(a\na u - u\na a)\cdot\na\phi\, dx dt = 0,
\qquad\forall\phi\in L^{\infty}(0,T; W^{1,\infty}_{c}(\R^{3})).
\end{align*}
All the computations here are formal, meaning we assume that $u$ and all related quantities have enough regularity for the mathematical manipulations to make sense. We refer to \cite{GZ} for the detailed calculations. Let 
$$
E(t) :=\IRd\frac{|x|^2}{2}u(x,t)dx ,\qquad
R(t) := 2\sqrt{\frac{E(t)}{\|u_0\|_{L^1}}},$$ 
and define $B_{R(t)}\equiv\{x\in\R^3~:~|x|<R(t)\}.$
We point out that, since
$ \int_{\R^3\backslash B_{R(t)}}u(x,t)dx\leq \frac{2 E(t)}{R(t)^2} = \frac{1}{2}\|u_0\|_{L^1}$, it follows
\begin{equation}
\int_{B_{R(t)}}u(x,t)dx = \|u_0\|_{L^1} - \int_{\R^3\backslash B_{R(t)}}u(x,t)dx \geq \frac{1}{2}\|u_0\|_{L^1} . \label{mass.R}
\end{equation}
%

\subsection*{A lower bound for $a[u]$.}
From the definition of $a[u]$ it follows
\begin{align*}
4\pi a[u](x,t) = \IRd\frac{u(y,t)}{|x-y|}dy\geq \int_{B_{R(t)}}\frac{u(y,t)}{|x-y|}dy\geq \frac{1}{R(t)+|x|}\int_{B_{R(t)}}u(y,t)dy
\geq \frac{\|u_0\|_{L^1}}{2(R(t)+|x|)}
\end{align*}
and therefore
\begin{align}\label{a.lb}
a[u](x,t)\geq \frac{1}{16\pi}\frac{\|u_0\|_{L^1}^{3/2}}{E(t)^{1/2} + |x|\|u_0\|_{L^1}^{1/2}} . 
\end{align}

\subsection*{A gradient estimate for even solutions.}
We assume here that the solution $u$ of \eqref{landau} is even w.r.t. each component of $x$, for $t\geq 0$.

%

Clearly $|x-y|\leq |x| + |y| \leq (1+|x|)(1+|y|)$ for $x,y\in\R^3$. Therefore
\begin{align*}
-4\pi\frac{d}{dt}H[u] &\geq 
\frac{1}{2}\IIRd\frac{u(x,t)u(y,t)}{(1+|x|)(1+|y|)}\left| \frac{\na u(x,t)}{u(x,t)} - \frac{\na u(y,t)}{u(y,t)} \right|^2 dx dy\\
&= \left(\IRd u(x,t)\frac{dx}{1+|x|} \right)\left(\IRd\frac{|\na u(x,t)|^2}{u(x)}\frac{dx}{1+|x|}\right) - \left| \IRd \frac{\na u(x,t)}{1+|x|}dx \right|^2 .
\end{align*}
For the assumption on $u$ it follows that 
$$ \left| \IRd \frac{\na u}{1+|x|}dx \right|^2 = \sum_{i=1}^3\left(\IRd \frac{\pa u}{\pa x_i} \frac{dx}{1+|x|}\right)^2 = 0. $$
As a consequence
\begin{align*}
-4\pi\frac{d}{dt}H[u] &\geq \left(\IRd u(x,t)\frac{dx}{1+|x|} \right)\left(\IRd\frac{|\na u(x,t)|^2}{u(x)}\frac{dx}{1+|x|}\right) .
\end{align*}
We now wish to show a positive lower bound for $\IRd u(x,t)\frac{dx}{1+|x|}$ for $0\leq t\leq T$. 
Let $R(t) = 2\sqrt{E(t)/\mass}$. It holds
$$ \IRd u(x,t)\frac{dx}{1+|x|}\geq \int_{B_{R(t)}} u(x,t)\frac{dx}{1+|x|} \geq \frac{1}{1+R(t)}\int_{B_{R(t)}}u(x,t)dx . $$
From \eqref{mass.R} it follows
\begin{align}\label{low.1}
\frac{1}{\pi}\IRd u(x,t)\frac{dx}{1+|x|}\geq \frac{1}{8\pi}\frac{\|u_0\|_{L^1}^{3/2}}{E(t)^{1/2} +\|u_0\|_{L^1}^{1/2}},\qquad t>0.
\end{align}
Since $E(t)$ is increasing, we conclude
\begin{align}\label{kappa.T}
\frac{1}{\pi}\inf_{t\in [0,T]}\IRd u(x,t)\frac{dx}{1+|x|}\geq \kappa(T) ,
\end{align}
with 
$$
\kappa(t):=\frac{1}{8\pi}\frac{\|u_0\|_{L^1}^{3/2}}{E(t)^{1/2} +\|u_0\|_{L^1}^{1/2}}.
$$
Moreover,
\begin{align}\label{dHdt.est}
\frac{d H[u]}{dt} + \kappa(t)\IRd \frac{|\na\sqrt{u(x,t)}|^2}{1+|x|}dx\leq 0,\qquad t>0.
\end{align}

\subsection*{Upper bound for $a[u]$.} It holds
\begin{align}
a[u](x,t) = \int_{|x-y|<1}\frac{u(y,t)}{|x-y|}dy + \int_{|x-y|\geq 1}\frac{u(y,t)}{|x-y|}dy \equiv I_1 + I_2 .\label{a.dec}
\end{align}
The integral $I_2$ can be estimated immediately:
$$ I_2\leq \mass . $$
For $I_1$ we first use H\"older: since $\frac{1}{|x|}$ is $L^q_{loc}(\R^3)$ for $q<3$, we get
\begin{align*}
I_1 &= \int_{|x-y|<1}\frac{u(y,t)}{|x-y|}dy \leq \left( \int_{|x-y|<1}u(y,t)^{3/2+\eps}dy \right)^{\frac{1}{3/2+\eps}}
\left( \int_{|x-y|<1}|x-y|^{-\frac{3+2\eps}{1+2\eps}}dy \right)^{\frac{1+2\eps}{3+2\eps}}\\
& \leq 4\pi \frac{1+2\eps}{4\eps}
\left( \int_{|y|<1+|x|}u(y,t)^{3/2+\eps}dy \right)^{\frac{1}{3/2+\eps}}
= \frac{(1+2\eps)\pi}{\eps}\|\sqrt{u(t)}\|_{L^{3+2\eps}(B_{1+|x|})}^2 .
\end{align*}
The interpolation inequality implies (for $0<\eps\leq 3/2$):
$$ \|\sqrt{u(t)}\|_{L^{3+2\eps}(B_{1+|x|})} \leq \|\sqrt{u(t)}\|_{L^{2}(B_{1+|x|})}^{1-\theta}\|\sqrt{u(t)}\|_{L^{6}(B_{1+|x|})}^{\theta},\quad
\theta = \frac{3}{2}\frac{1+2\eps}{3+2\eps} . $$
Then, the Sobolev embedding $H^1\hookrightarrow L^6$ implies
\begin{equation}
\|\sqrt{u(t)}\|_{L^{3+2\eps}(B_{1+|x|})} \leq C(|x|)\mass^{(1-\theta)/2}\|\sqrt{u(t)}\|_{H^1(B_{1+|x|})}^\theta . \label{est.tmp}
\end{equation}
Notice that the constant $C$ in \eqref{est.tmp} depends on $|B_{1+|x|}|$ and therefore on $|x|$. 
However, it is easy to show that such constant (assuming w.l.o.g. that it is optimal) is nonincreasing with respect to $|x|$,
thus \eqref{est.tmp} leads to
\begin{align}
\|\sqrt{u(t)}\|_{L^{3+2\eps}(B_{1+|x|})} \leq C\mass^{(1-\theta)/2}\|\sqrt{u(t)}\|_{H^1(B_{1+|x|})}^\theta .\label{est.2}
\end{align}
From \eqref{est.2} we obtain
\begin{align*}
I_1 &\leq \eps^{-1}C \|\sqrt{u(t)}\|_{H^1(B_{1+|x|})}^{2\theta }
\leq \eps^{-1}C(1 + \| \na \sqrt{u(t)}\|_{L^2(B_{1+|x|})}^{2})^{\theta} \\
&\leq \eps^{-1}C\left(1 + (2+|x|)\IRd \frac{|\na \sqrt{u(y,t)}|^2}{1+|y|} dy \right)^{\theta} \\
&\leq \eps^{-1}C(1+|x|)^\theta\left(1 + \IRd \frac{|\na \sqrt{u(y,t)}|^2}{1+|y|} dy \right)^{\theta} .
\end{align*}
The estimates of $I_1$, $I_2$ imply
$$ a[u](x,t)^{1/\theta} \leq \eps^{-1}C(1+|x|)\left(1 + \IRd \frac{|\na \sqrt{u(y,t)}|^2}{1+|y|} dy \right).$$
The entropy estimate obtained earlier  
\begin{align*}
\frac{d H[u]}{dt} + \kappa(t)\IRd \frac{|\na\sqrt{u(x,t)}|^2}{1+|x|}dx\leq 0,\qquad t>0,
\end{align*}
 leads to
$$ a[u](x,t)^{1/\theta} \leq \eps^{-1}C(1+|x|)\left(1 - \frac{1}{\kappa(t)}\frac{d H[u(t)]}{dt} \right) ,\qquad
\frac{1}{\theta} =  \frac{2(3+2\eps)}{3(1+2\eps)},\qquad 0<\eps\leq \frac{3}{2}. $$
We can restate the above estimate in a more handy way by defining $p = 1/\theta \in [1,2)$ and noticing that 
$\eps^{-1}\leq C (2-p)^{-1}$:
\begin{align}\label{est.a.1}
a[u](x,t)^{p} &\leq \frac{C}{2-p}(1+|x|)\left(1 - \frac{1}{\kappa(t)}\frac{d H[u(t)]}{dt} \right) ,\qquad 1\leq p < 2,
\end{align}
with $\kappa(t)$ given by \eqref{low.1}.

\subsection*{Lower bound for $H[u]$.}
A lower bound for $H[u(t)]$ is here showed. Being the spatial domain the whole space $\R^3$, this lower bound is not straightforward. 
To prove a lower bound for $H[u]$, we write 
$$
H[u] =\IRd u(x)\log(u(x))\chi_{\{u<1\}}\;dx + \IRd u(x)\log(u(x))\chi_{\{u>1\}}\;dx,
$$
and apply H\"older's inequality to get
\begin{align*}
-H[u] &\leq \int_{\{u<1\}}u(x)\log\frac{1}{u(x)}\, dx = \int_{\{u<1\}}u(x)^{(1-\eps)/2} u(x)^{(1+\eps)/2}\log\frac{1}{u(x)}\, dx\\
&\leq \left( \int_{\{u<1\}}u(x)^{1-\eps}dx \right)^{1/2}\left( \int_{\{u<1\}}u(x)^{1+\eps} \left( \log\frac{1}{u(x)} \right)^2 dx \right)^{1/2}.
\end{align*}
Since the function $s\in (0,1)\mapsto s^{\eps/2}\log(1/s)\in\R$ is bounded, we can estimate the term
$$
\int_{\{u<1\}}u(x)^{1+\eps} \left( \log\frac{1}{u(x)} \right)^2 dx
$$
with a constant that only depends on $\varepsilon$ and the $L^1$ norm of the initial data. 
Therefore
\begin{align}
-H[u] &\leq C_\eps\left( \int_{\{u<1\}}u(x)^{1-\eps}dx \right)^{1/2}\leq C_\eps\left( \IRd u(x)^{1-\eps}dx \right)^{1/2}.
\label{H.lb.1}
\end{align}
Let us now consider the integral
\begin{align*}
\IRd u(x)^{1-\eps}dx &= \int (1+|x|^2)^{1-\eps}u(x)^{1-\eps} (1+|x|^2)^{-(1-\eps)}  dx\\
&\leq\left( \IRd (1+|x|^2)u(x)dx \right)^{1-\eps}\left(\IRd (1+|x|^2)^{-(1-\eps)/\eps}dx\right)^{\eps}.
\end{align*}
For $\eps<2/5$ we obtain
\begin{align*}
\IRd u(x)^{1-\eps}dx &\leq C_\eps\left( \IRd (1+|x|^2)u(x)dx \right)^{1-\eps}.
\end{align*}
From the above estimate and \eqref{H.lb.1} we conclude
\begin{equation}
-H[u(t)] \leq C_\eps (1+E(t))^{(1-\eps)/2},\qquad 0<\eps<2/5,~~ t>0.\label{H.lb}
\end{equation}

\subsection*{Estimate for $E(t)$.} We recall that $E(t) = \IRd\frac{|x|^2}{2}u(x,t)dx$, $t>0$. From \eqref{mom.2}, \eqref{est.a.1} it follows
($p' \equiv p/(p-1)$):
\begin{align*}
\frac{d E(t)}{dt} &\leq 2\IRd a(x,t)u(x,t)^{1/p} u(x,t)^{1/p'}dx \leq 2\left( \IRd a(x,t)^p u(x,t) dx \right)^{1/p}\mass^{1/p'} \\
&\leq C_p \left(1 - \frac{1}{\kappa(t)}\frac{d H[u(t)]}{dt} \right)^{1/p}\left( \IRd (1+|x|) u(x,t) dx \right)^{1/p} \\
&\leq C_p \left(1 - \frac{1}{\kappa(t)}\frac{d H[u(t)]}{dt} \right)^{1/p}\left( \IRd \left(\frac{3}{2}+\frac{|x|^2}{2}\right) u(x,t) dx \right)^{1/p} \\
&\leq C_p \left(1 - \frac{1}{\kappa(t)}\frac{d H[u(t)]}{dt} \right)^{1/p}(1+E(t))^{1/p}.
\end{align*}
The definition \eqref{low.1} of $\kappa(t)$ implies that $\kappa(t)^{-1}\leq C(1+\sqrt{E(t)}) \leq C\sqrt{1+E(t)}$, so
\begin{align*}
\frac{d E(t)}{dt} &\leq C_p\left(1 - \frac{d H[u(t)]}{dt} \right)^{1/p}(1+E(t))^{\frac{3}{2p}}.
\end{align*}
Choosing $p\in (3/2,2)$, dividing the above inequality times $(1+E(t))^{3/2p}$ and integrating it in the time interval $[0,t]$ 
leads to ($E_0\equiv\IRd\frac{|x|^2}{2}u_0(x)dx$)
\begin{align*}
&(1+E(t))^{1-3/2p} - (1+E_0)^{1-3/2p} \leq C_p\int_0^t\left(1 - \frac{dH[u]}{dt}\right)^{1/p}dt'\\
&\leq C_p t^{1-1/p}\left(\int_0^t\left(1 - \frac{dH[u]}{dt}\right)dt'\right)^{1/p} = 
C_p t^{1-1/p}( t + H[u_0] - H[u(t)] )^{1/p}.
\end{align*}
By inserting \eqref{H.lb} into the above inequality we get
\begin{align*}
&(1+E(t))^{1-3/2p} - (1+E_0)^{1-3/2p} \leq C_{p,\eps} t^{1-1/p}( t + H[u_0] + (1+E(t))^{(1-\eps)/2})^{1/p} \\
&\qquad\leq C_{p,\eps}(1+t)(1+E(t))^{(1-\eps)/2p} ,\qquad \frac{3}{2}<p<2,~~ 0<\eps<\frac{2}{5},~~ t>0. 
\end{align*}
Let now $9/5 < p < 2$. We want to choose $\eps\in (0,2/5)$ such that $1-3/2p > (1-\eps)/2p$. This is equivalent to $\eps > 4-2p$. Since $p>9/5$,
it follows that $4-2p < 2/5$, so this choice of $\eps$ is admissible. Therefore Young inequality allows us to estimate the right-hand side of the above inequality
as follows
\begin{align*}
&(1+E(t))^{1-3/2p} - (1+E_0)^{1-3/2p} \leq C_{p,\eps}(1+t)^\xi + \frac{1}{2}(1+E(t))^{1-3/2p},\quad \xi = \frac{2p-3}{2p-4+\eps},
\end{align*}
and so we conclude
\begin{equation}
E(t) \leq C_{p,\eps}(1 + t^{2p/(2p-4+\eps)})\qquad t>0,\quad \frac{9}{5}<p<2, \quad 4-2p<\eps<\frac{2}{5}.\label{E.ub}
\end{equation}
For example, if $p = (9/5+2)/2 = 19/10$ and $\eps = (4-2p+2/5)/2 = 3/10$, then $2p/(2p-4+\eps) = 38$.

Bound \eqref{E.ub} means that $E\in L^\infty_{loc}(0,\infty)$. A few consequences of this fact are, for example, that for any $T>0$:
\begin{enumerate}
\item the quantity $\kappa(t)$ defined in \eqref{low.1} and appearing e.g. in \eqref{est.a.1} is uniformely positive for $t\in [0,T]$;
\item the entropy $H[u(t)]$ has a uniform lower bound for $t\in [0,T]$;
\item ineq.~\eqref{dHdt.est} and the mass conservation yield the following estimate:
\begin{equation}
\|\sqrt{u}\|_{L^2(0,T; H^1(\R^3,\gamma(x)dx)}\leq C_T,\qquad \gamma(x)\equiv (1+|x|)^{-1};\label{est.nau}
\end{equation}
\item the lower bound \eqref{a.lb} for $a$ is uniform in $t\in[0,T]$.
\end{enumerate}

\section{Conditional smoothness}

\subsection{Conditional regularity estimates.} This section concerns results of conditional regularity of solutions to (\ref{landau}). These results are based upon a so-called {\em $\varepsilon$-Poincar\'e inequality}. 
We say that $u$ satisfies the $\varepsilon$-Poincar\'e inequality if given $\varepsilon>0$ as small as one wishes, 
there exists a constant $C_\varepsilon$ such that the following inequality holds true
 \begin{align}\label{eqn:epsilon_Poincare_inequality strongerII}
  \begin{array}{l}
	  \int_{\mathbb{R}^d}  u \phi^2\;dx \leq \varepsilon \int_{\mathbb{R}^d} a[u] |\nabla\phi|^2 \;dx + C_\varepsilon\int_{\mathbb{R}^d}	\phi^2\;dx,
  \end{array}	  
\end{align} 
for any $\phi\in L^1_{loc}(\R^3)$ that makes the right-hand side of \eqref{eqn:epsilon_Poincare_inequality strongerII} convergent.

\begin{theo}[Conditional regularity]\label{thr.reg} Let $u$ be a solution to \eqref{landau}. Assume $u$ is such that (\ref{eqn:epsilon_Poincare_inequality strongerII}) holds true. 
Then for any $s_1>1$, $s_2>\frac{1}{3}$, $T>0$, $R>0$ there exist constants $C_1=C_1(T,u_0,s_1,R)$, $C_2=C_2(T,u_0,s_2)$ such that 
\begin{align*}
\|u\|_{L^\infty(B_R\times(t,T))} &\le C(T,u_0, s_1,R)\left(\frac{1}{t}+ 1\right)^{s_1},\qquad t\in (0,T),\\
\|a[u]\|_{L^\infty(\R^3\times (t,T))} &\le C(T,u_0, s_2)\left(\frac{1}{t}+ 1\right)^{s_2},\qquad t\in (0,T),
\end{align*}
where $B_R\subset\R^3$ is any ball of radius $R$.
\end{theo}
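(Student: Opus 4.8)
The plan is a De Giorgi--Nash--Moser iteration in which the $\eps$-Poincar\'e inequality \eqref{eqn:epsilon_Poincare_inequality strongerII} plays the role a Sobolev inequality cannot, namely absorbing the supercritical zero-order term. First I would expand the divergence in \eqref{landau} and use $-\Delta a=u$ to write the equation as $u_t=\Div(a\na u)-\na u\cdot\na a+u^2$. Fixing $t\in(0,T)$, a level $k>0$, and a smooth cut-off $\zeta$ supported in a ball $B_{2R}$, I test with $(u-k)_+\zeta^2$, integrate by parts, and use the identity $\na u\cdot\na a=\Div(u\na a)+u^2$ once more to re-integrate the drift. The outcome is that every surviving $\na a$ carries a factor $\na\zeta$ --- hence lives on the annulus $\{\na\zeta\neq 0\}$, away from the region where the bound is sought --- while the zero-order terms collapse to $\int u\,((u-k)_+\zeta)^2\,dx$ together with harmless terms of type $k^2\int(u-k)_+\zeta^2\,dx$. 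The first of these is precisely of the shape controlled by \eqref{eqn:epsilon_Poincare_inequality strongerII}: with $\phi=(u-k)_+\zeta$ one has $\int u\phi^2\le\eps\int a|\na\phi|^2+C_\eps\int\phi^2$, and for $\eps$ small the first term is absorbed by the diffusion $\int a\zeta^2|\na(u-k)_+|^2$. The annular $\na a$-terms are controlled by the baseline regularity $a\in L^\infty(0,T;L^3_{loc})$, $\na a\in L^\infty(0,T;L^{3/2}_{loc})$ of the weak-solution class of Section~\ref{even_sol} (one more integration by parts, then Young's inequality, keeping the resulting linear-in-energy contribution with small coefficient), together with the smallness of $|\{u>k\}|$ from Chebyshev. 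This yields a genuine De Giorgi energy inequality on parabolic cylinders.

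Next I would extract the super-linear gain. By \eqref{a.lb}, which item~(4) after \eqref{E.ub} makes uniform for $t\in[0,T]$, one has $a(x,t)\ge c(R,T)>0$ on $B_{2R}$, so there $\int a\zeta^2|\na v|^2$ dominates $\int|\na(\zeta v)|^2$; combined with $\sup_\tau\int(\zeta v)^2$ and the parabolic embedding $L^\infty_t L^2_x\cap L^2_t H^1_x\hookrightarrow L^{10/3}_{t,x}$, this provides the gain. Iterating on the nested data $k_j=M(1-2^{-j})$, $r_j=R(1+2^{-j})$, $\tau_j=t(1-2^{-j})$ with energies
\begin{align*}
\mathcal A_j=\sup_{\tau_j<\tau<T}\int_{B_{r_j}}(u-k_j)_+^2\,dx+\int_{\tau_j}^{T}\!\!\int_{B_{r_j}}a\,|\na(u-k_j)_+|^2\,dx\,d\tau ,
\end{align*}
one derives the recursion $\mathcal A_j\le C\,b^{j}\,t^{-\beta}\,M^{-\mu}\,\mathcal A_{j-1}^{1+\sigma}$ with structural $b>1$, $\beta,\mu,\sigma>0$; the factors $b^j$ and $t^{-\beta}$ encode $\|\na\zeta_j\|_\infty\sim 2^j/R$ and $(\tau_j-\tau_{j-1})^{-1}\sim 2^j/t$. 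The fast-convergence lemma then forces $\mathcal A_j\to0$, i.e. $u\le M$ on $B_R\times(t,T)$, as soon as $M\ge C\,(t^{-1}+1)^{\beta/\mu}\,\|u\|_{L^2(B_{2R}\times(0,T))}^{\sigma/\mu}$; here $\|u\|_{L^2(B_{2R}\times(0,T))}$ is finite and controlled by $\|u_0\|_{L^1}$, $E_0$, $H[u_0]$, $T$ through \eqref{est.nau}, and the structural exponents can be chosen so that $\beta/\mu<s_1$ for any prescribed $s_1>1$. This gives the first estimate.

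For the bound on $a[u]$ I would argue pointwise. Since $u$ spreads, $\int_{|y-x|\ge|x|/2}u(y,t)\,dy\le 8E(t)/|x|^2$; splitting $4\pi a[u](x,t)=\int_{|x-y|<r}+\int_{|x-y|\ge r}$ and optimising in $r$ gives $a[u](x,t)\le C\,\|u(t)\|_{L^1}^{2/3}\,\|u(t)\|_{L^\infty(B_1(x))}^{1/3}+C(1+|x|)^{-1}\big(E(t)+\|u_0\|_{L^1}\big)$. Running the iteration of the previous steps on unit balls $B_1(x)$ bounds $u$ on $B_1(x)$ by $C(T,u_0,s_1)(t^{-1}+1)^{s_1}$ with a constant that, far from the origin, is dominated by the displayed decay, hence is uniform in the center; combined with mass conservation this gives $\|a[u](\cdot,t)\|_{L^\infty(\R^3)}\le C(T,u_0)(t^{-1}+1)^{s_1/3}+C(T,u_0)$, i.e. the claimed estimate with $s_2=s_1/3$, so every $s_2>1/3$ is attained by taking $s_1=3s_2>1$.

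I expect the real obstacles to be the following. The hypothesis \eqref{eqn:epsilon_Poincare_inequality strongerII} disposes of the supercritical term $u^2$, the true obstruction to regularity of the full equation; what remains is that (i) the drift $\na u\cdot\na a$ sits exactly at the critical Lebesgue scaling in $\R^3$ and admits no perturbative absorption --- it is tamed only through the cancellation coming from $-\Delta a=u$, which confines it to the annulus, plus the weak-solution bounds on $a$, and the bookkeeping of the resulting linear-in-energy term; (ii) the lower bound \eqref{a.lb} decays like $|x|^{-1}$, so the diffusion degenerates at spatial infinity, which is exactly why the $L^\infty$ bound for $u$ is only local and why weighted Sobolev inequalities are needed for the global estimates underlying the bound on $a$; and (iii) the De Giorgi smallness threshold must be realised with the correct power of $t^{-1}$ and with constants depending only on $\|u_0\|_{L^1},E_0,H[u_0],T$ (and $R$ for $u$). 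Of these, (i) is the crux.
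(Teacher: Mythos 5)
Your De Giorgi skeleton is pointed in the right direction --- in particular, testing with level-set truncations and invoking the $\eps$-Poincar\'e inequality with $\phi=\eta u_k^{p/2}$ to absorb the supercritical zero-order term is exactly what the paper does (Proposition~\ref{prop:energy_identity}, Lemma~\ref{lemma:2}, Lemma~\ref{lemma3}). However there are two genuine gaps.

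First, and most seriously, you assert that ``the structural exponents can be chosen so that $\beta/\mu<s_1$ for any prescribed $s_1>1$,'' but a single De Giorgi pass starting from $L^2_{t,x}$ with the local lower bound $a\ge c(R,T)$ and the standard parabolic embedding $L^\infty_t L^2_x\cap L^2_t H^1_x\hookrightarrow L^{10/3}_{t,x}$ delivers a \emph{fixed} exponent (around $(d+2)/4=5/4$ in $d=3$), not a tunable one. The mechanism the paper uses to force the exponent down to any $s_1>1$ is a preliminary bootstrap in integrability that your proposal omits entirely. Its two ingredients are the weighted Sobolev inequality of Lemma~\ref{lem:Inequalities Sobolev weight aII}, $\bigl(\int_I\int\phi^q a\,dx\,dt\bigr)^{2/q}\le C\bigl(\int_I\int a|\na\phi|^2+\sup_I\int\phi^2\bigr)$ with $q$ up to $10/3$, and the Gressman--Krieger--Strain non-local Poincar\'e inequality \eqref{eq:GressmannKriegerStrain}, $\int u^{p+1}\le\bigl(\tfrac{p+1}{p}\bigr)^2\int a|\na u^{p/2}|^2$, which costs no constant depending on $t$ or on the domain. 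Iterating Corollary~\ref{cor_final_lp} against \eqref{eq:GressmannKriegerStrain} yields Lemma~\ref{gain_n}: $\sup_{[T/4,T]}\int u^{5/3+n}\le C(1/T+1)^{n+1}\int_0^T\int u^{5/3}$. Starting the final De Giorgi iteration from $L^{5/3+n}$ instead of $L^2$, one gets an exponent $\alpha(n)$ which tends to $1$ as $n\to\infty$; this is what gives ``any $s_1>1$.'' Without the GKS inequality --- which is $a$-weighted on both sides and thus immune to the degeneracy of $a$ at infinity --- there is no comparable gain, and replacing it by the local coercivity $a\ge c(R,T)$ plus ordinary Sobolev cannot produce a tunable rate.

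Second, for $\|a[u]\|_{L^\infty}$ you propose to cover $\R^3$ by unit balls $B_1(x)$ and use a center-uniform local $L^\infty$ bound on $u$; but the lower bound \eqref{a.lb} decays like $|x|^{-1}$, so the coercivity constant $c(R,T)$ in your local energy inequality degenerates linearly as the center moves out, and the claimed compensation from $E(t)/|x|^2$ is not established. The paper bypasses localization entirely for this estimate: it proves the global $L^\infty_t L^{5/3+n}_x$ bound \eqref{u_inf_n} directly from Lemma~\ref{gain_n} and Lemma~\ref{LpLp}, then runs the elementary splitting/optimization in Lemma~\ref{a_inf} to obtain $\|a\|_{L^\infty}\le C(1/t+1)^{(n+1)/(3n+2)}$, and since $(n+1)/(3n+2)\to 1/3$ this achieves every $s_2>1/3$ without ever needing an $L^\infty$ bound for $u$ far from the origin.

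In short: the $\eps$-Poincar\'e absorption and the De Giorgi framework are right, but the claimed rate $s_1>1$ is presently unsupported; the missing idea is the $a$-weighted Sobolev inequality together with the GKS non-local Poincar\'e inequality \eqref{eq:GressmannKriegerStrain} and the integrability bootstrap of Lemma~\ref{gain_n}, which is also what supplies the global $L^p$ control that the $a[u]$ estimate actually rests on.
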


Weighted Sobolev and Poincare's inequalities have been used to obtain informations about eigenvalues for the Schr\"odinger and degenerate elliptic operators  \cite{CWW, CW1, CW2, FP78, SW}.  
Inspired by the similarity of (\ref{landau}) with the degenerate operator $L = -\textrm{div}(a[u] \nabla )-u$, in \cite{GG17} the new inequality (\ref{eqn:epsilon_Poincare_inequality strongerII}) has been proposed. We refer to \cite{GG17} for discussions about (\ref{eqn:epsilon_Poincare_inequality strongerII}). While (\ref{eqn:epsilon_Poincare_inequality strongerII}) is always true provided $u$ solves the Landau equation for soft-potentials \cite{GG17}, the validity of (\ref{eqn:epsilon_Poincare_inequality strongerII}) for Coulomb interactions is still an open question, undoubtedly a very interesting and fundamental one. Consequently the results in Theorem \ref{thr.reg} should be viewed as conditional.

Very interesting is the rate of decay in the estimate for $\|u\|_{L^\infty(B_R\times(t,T))}$. In fact one would expect a decay with a rate similar to the heat kernel $1/t^{3/2}$. However thanks to a combination of (\ref{eqn:epsilon_Poincare_inequality strongerII}) and a non-local Poincare's inequality proven in \cite{GKS} we obtain a decay that can be made arbitrary close to $1/t$. 
 
The proof of Theorem \ref{thr.reg} is divided into several Lemmas and Propositions. We will make use of the following

\begin{lemma}[Weighted Sobolev inequality]\label{lem:Inequalities Sobolev weight aII}
Let $u$ be a solution to  (\ref{landau}). Any smooth function $\phi$ satisfies
    \begin{align*}
     \left ( \int_{I}\int \phi^q a[u]\;dxdt \right )^{2/q}& \leq C\left ( \int_{I} \int a[u] |\nabla \phi|^2 \;dxdt +  \sup \limits_{I} \int  \phi^{2}\;dx \right ),
    \end{align*}
    with
    \begin{align*}
    q \in \left( 1 , 2\left(1+\frac{2}{3}\right)\right). 
    \end{align*}
  \end{lemma}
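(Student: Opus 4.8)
The plan is to combine the $\varepsilon$-Poincaré inequality \eqref{eqn:epsilon_Poincare_inequality strongerII} with a standard weighted Gagliardo–Nirenberg–Sobolev interpolation, using $a[u]$ as the weight. The target exponent range $q\in(1,2(1+2/3))=(1,10/3)$ is exactly the range one gets from interpolating $L^2$ in time against the gain from the Sobolev embedding $H^1(\mathbb{R}^3)\hookrightarrow L^6$ in space, so the structure of the argument is dictated by parabolic interpolation. First I would reduce to controlling $\int_I\int \phi^q a[u]\,dx\,dt$ by splitting the exponent: write $q=2\alpha+6(1-\alpha)\cdot(\text{something})$ — more precisely, for a fixed time slice use Hölder to interpolate $\|\phi\|_{L^q}$ between $\|\phi\|_{L^2}$ and $\|\phi\|_{L^6}$, then bring in the weight $a[u]$. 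The point is that $a[u]$ does not disturb the spatial integrability because it will be absorbed either into the $L^2$-in-space factor (handled by \eqref{eqn:epsilon_Poincare_inequality strongerII}) or treated as an $L^\infty$-harmless multiplier after the interpolation is set up.

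The key steps, in order, are: (i) for a.e.\ $t\in I$, apply Hölder in $x$ to write $\int \phi^q a[u]\,dx$ as a product of $\left(\int \phi^2 a[u]\,dx\right)^{\theta}$ and $\left(\int \phi^6\,dx\right)^{\text{power}}$ or of $\left(\int \phi^2 a[u]\,dx\right)^{\theta}\left(\int a[u]|\nabla\phi|^2\,dx\right)^{\text{power}}$, with the split exponents chosen so that the total homogeneity in $\phi$ equals $q$ and the time exponents add to $1$ after a second Hölder in $t$; (ii) estimate $\int \phi^2 a[u]\,dx$ by the right-hand side of \eqref{eqn:epsilon_Poincare_inequality strongerII}, i.e.\ by $\varepsilon \int a[u]|\nabla\phi|^2\,dx + C_\varepsilon\int\phi^2\,dx$, which after $\sup_I$ and integration over $I$ is controlled by the two quantities on the right of the claimed inequality; (iii) use the Sobolev embedding on any remaining pure $\int \phi^6\,dx$ factor to convert it into $\int|\nabla\phi|^2\,dx$ plus lower-order terms — but here one must be careful, since the statement only allows $\int a[u]|\nabla\phi|^2$, not $\int|\nabla\phi|^2$, on the right; this forces the interpolation to be arranged so that the gradient always appears paired with the weight $a[u]$, exploiting the lower bound \eqref{a.lb} (uniform on $[0,T]$ by item (4) after \eqref{E.ub}) to replace $\int|\nabla\phi|^2$ on a ball by $C\int a[u]|\nabla\phi|^2$; (iv) integrate in $t$ over $I$, apply Hölder in time with the conjugate exponents produced in step (i), and bound the supremum factor by $\sup_I\int\phi^2\,dx$; (v) raise both sides to the power $2/q$ and collect constants.

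The main obstacle I anticipate is keeping the gradient term always weighted by $a[u]$ while still extracting a genuine Sobolev gain. Since $a[u]$ has only a quantitative lower bound that degrades like $1/(1+|x|)$ at infinity (from \eqref{a.lb}), one cannot globally write $\int|\nabla\phi|^2\lesssim \int a[u]|\nabla\phi|^2$; the honest route is to localize or to absorb the decay into the admissible exponent window, which is presumably why $q$ is capped strictly below $10/3$ rather than reaching the endpoint. A clean way around this is to run the interpolation in the form $\|\phi\|_{L^q(a\,dx)}^q \lesssim \|\phi\|_{L^2(a\,dx)}^{q(1-\lambda)}\,\|\phi\|_{L^{6}(a\,dx)}^{q\lambda}$ and then handle $\|\phi\|_{L^6(a\,dx)}$ by a weighted Sobolev inequality with weight $a[u]$ directly — but establishing that weighted Sobolev inequality with the correct constant is itself the crux, and it is where the $A_p$-type nature of $a[u]$ (or, concretely, the representation $a[u]=(-\Delta)^{-1}u$ with $u$ of finite mass, energy, entropy) must be used. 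I would expect the cleanest proof to invoke a known weighted Sobolev/Poincaré estimate for such Newtonian-potential weights, reducing everything to verifying the hypotheses of that estimate, and then the parabolic interpolation in step (i)–(iv) is routine.
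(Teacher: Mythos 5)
The paper does not give a proof of this lemma: the ``proof'' is the single line ``We refer to \cite{GG17} for a detailed proof.'' So there is no in-paper argument to compare against; the result is imported from the companion $A_p$-weights paper. Your closing guess --- that the real work is a weighted Sobolev/Poincar\'e estimate for the Newtonian-potential weight $a[u]=(-\Delta)^{-1}u$, exploiting its Muckenhoupt-type structure, after which the parabolic interpolation $L^\infty_tL^2_x\cap L^2_t(a\,dx;\dot H^1)\hookrightarrow L^{10/3}_{t,x}(a\,dx)$ is routine --- is exactly the flavour of argument \cite{GG17} is built around, and the exponent window $q<2(1+\tfrac{2}{3})$ is indeed the $d=3$ parabolic threshold you identify.

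However, the concrete sketch you give (steps (i)--(v)) has two genuine problems and would not close as written. First, step (ii) misreads the $\varepsilon$-Poincar\'e inequality \eqref{eqn:epsilon_Poincare_inequality strongerII}: it controls $\int u\,\phi^2\,dx$, not $\int a[u]\,\phi^2\,dx$. The two quantities are very different ($u$ is the density, $a[u]$ its Newtonian potential), so you cannot use \eqref{eqn:epsilon_Poincare_inequality strongerII} to absorb the weight in the $L^2(a\,dx)$ factor of your interpolation; that step simply does not give what you claim. Second, and more structurally, Lemma~\ref{lem:Inequalities Sobolev weight aII} is stated \emph{unconditionally} for solutions of \eqref{landau}, whereas \eqref{eqn:epsilon_Poincare_inequality strongerII} is precisely the conditional hypothesis of Theorem~\ref{thr.reg} that is open for Coulomb interactions. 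Building the proof of the unconditional lemma on the conditional Poincar\'e inequality would make the whole logical chain circular (the lemma is then used to prove the conditional theorem). The honest route is the one you name at the end but do not carry out: prove directly that $a[u]$ (or its comparable lower barrier $\sim (1+|x|)^{-1}$ from \eqref{a.lb}, uniform on $[0,T]$) admits a weighted Sobolev inequality of the form $\|\phi\|_{L^6(a\,dx)}\lesssim \|\nabla\phi\|_{L^2(a\,dx)}+\|\phi\|_{L^2(dx)}$, using the $A_p$-type properties of the weight, and then run the standard $\theta$-interpolation in $x$ and H\"older in $t$ to reach any $q<10/3$. As it stands your proposal identifies the right endpoint and the right target structure but does not supply the key weighted-Sobolev step and misuses the one inequality it does invoke.
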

  \begin{proof}
  We refer to  \cite{GG17} for a detailed proof.
  \end{proof}

We define $u_k := (u-k)_+$ for a generic constant $k>0$.

   \begin{prop}\label{prop:energy_identity} The following inequality holds:
    \begin{align}\label{est3-1}
      & \partial_t\int \eta^2 u_k^p\;dx + \frac{4(p-1)}{p}\int a|\nabla (\eta u_k^{p/2})|^2\;dx {+\frac{p(p-1)\tau}{2}\int \frac{u_k^{p-2}}{u^3}|\na u_k|^4 \eta^2 dx}\\
      &\qquad\leq \textnormal{(I)} + \textnormal{(II)} 
      {+ C\tau\int \eta^2 u_k^p\; dx + C(p)\tau\int \left(1+\frac{|\na\eta|^{4p}}{\eta^{4p}}\right)\eta^2 dx,}\nonumber
    \end{align}
    where 
    \begin{align*}		  
      \textnormal{(I)} & := \frac{4(p-2)}{p}\int u_k^{p/2}(a\nabla (\eta u_k^{p/2}),\nabla \eta)\;dx+\frac{4}{p}\int u_k^{p}(a\nabla \eta,\nabla \eta)\;dx,\\
  	\textnormal{(II)} & := \int u_k^p (\nabla a,\nabla (\eta^2))\;dx+(p-1)\int u\eta^2 u_k^p\;dx+pk\int u\eta^2 u_k^{p-1} \;dx.
    \end{align*}   
  \end{prop}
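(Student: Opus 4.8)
The plan is to derive the inequality by testing the (regularized) equation with the natural test function associated to the Caccioppoli-type quantity $\int\eta^2 u_k^p\,dx$, namely $\phi = \eta^2 u_k^{p-1}$, where $u_k=(u-k)_+$ and $\eta$ is a smooth spatial cutoff. Concretely, I would work with the $\tau$-regularized problem suggested by the $\tau$-terms in the statement (presumably $u_t = \Div(a\na u - u\na a) + \tau\,\Div(u^{-2}|\na u|^2\na u\cdots)$ or a similar entropy/porous-medium regularization; the exact form is fixed in \cite{GZ,GG17}), so that all manipulations below are legitimate. The starting point is
$$
\frac{d}{dt}\int \eta^2 u_k^p\,dx = p\int \eta^2 u_k^{p-1}\,\partial_t u\,dx,
$$
valid since $\partial_t u_k = \partial_t u$ on $\{u>k\}$ and $u_k=0$ otherwise. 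I would substitute $\partial_t u$ from the equation and integrate by parts in the divergence terms.

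The core computation is the treatment of the diffusion term $\int \eta^2 u_k^{p-1}\Div(a\na u)\,dx$. Integrating by parts gives $-\int a\,\na u\cdot\na(\eta^2 u_k^{p-1})\,dx$; expanding the gradient produces a good term $-(p-1)\int a\,\eta^2 u_k^{p-2}|\na u_k|^2\,dx$ and a cross term $-2\int a\,\eta\,u_k^{p-1}\na u\cdot\na\eta\,dx$. The standard algebraic identity
$$
|\na(\eta u_k^{p/2})|^2 = \tfrac{p^2}{4}\eta^2 u_k^{p-2}|\na u_k|^2 + p\,\eta\,u_k^{p/2-1}\cdots\na u_k\cdot\na\eta + u_k^p|\na\eta|^2
$$
is used to rewrite $-(p-1)\int a\,\eta^2 u_k^{p-2}|\na u_k|^2\,dx$ as $-\tfrac{4(p-1)}{p^2}\int a|\na(\eta u_k^{p/2})|^2\,dx$ plus lower-order pieces, which after collecting constants yields the $\frac{4(p-1)}{p}\int a|\na(\eta u_k^{p/2})|^2\,dx$ term on the left and the contributions to (I) on the right (the $\frac{4(p-2)}{p}$ and $\frac{4}{p}$ coefficients come precisely from this bookkeeping). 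The drift term $-\int \eta^2 u_k^{p-1}\Div(u\na a)\,dx$ integrates by parts to $\int u\,\na a\cdot\na(\eta^2 u_k^{p-1})\,dx$; on $\{u>k\}$ one writes $u = u_k + k$ and $\na a\cdot\na(\eta^2 u_k^{p-1}) = (p-1)\eta^2 u_k^{p-2}\na a\cdot\na u_k + u_k^{p-1}\na a\cdot\na(\eta^2)$. For the term with $\na u_k$ one uses $-\Delta a = u$ together with the identity $\na a\cdot\na u_k\,u_k^{p-2}\eta^2 = \tfrac1p\na a\cdot\na(u_k^p)\,\eta^2\cdot\tfrac{p-1}{?}$, integrating by parts once more to move the derivative onto $\na a$, producing $-\Delta a = u$ and hence the terms $(p-1)\int u\eta^2 u_k^p\,dx$ and $pk\int u\eta^2 u_k^{p-1}\,dx$ in (II), with the remaining piece $\int u_k^p\na a\cdot\na(\eta^2)\,dx$ being the first term of (II). Finally, the $\tau$-regularization contributes the good fourth-order term $\frac{p(p-1)\tau}{2}\int u^{-3}u_k^{p-2}|\na u_k|^4\eta^2\,dx$ on the left, while the error terms it generates are absorbed into $C\tau\int\eta^2 u_k^p\,dx + C(p)\tau\int(1+|\na\eta|^{4p}\eta^{-4p})\eta^2\,dx$ via Young's inequality (the high power $4p$ reflecting that the regularizing term is quartic in gradients).

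The main obstacle I anticipate is the careful handling of the drift term $\int u\,\na a\cdot\na(\eta^2 u_k^{p-1})\,dx$: one must split $u=u_k+k$ and, crucially, integrate by parts a \emph{second} time to convert $\na a\cdot\na(u_k^p)$ into $-u_k^p\,\Delta a = u_k^p\,u$, since there is no direct pointwise bound on $\na a$ that survives on $\R^3$ — this is exactly where the structure $-\Delta a = u$ is indispensable, and it is what generates the superlinear terms $(p-1)\int u\eta^2 u_k^p$ and $pk\int u\eta^2 u_k^{p-1}$ that later (outside this proposition) are controlled via the $\eps$-Poincar\'e inequality. A secondary technical point is keeping track of the exact numerical coefficients ($4(p-1)/p$, $4(p-2)/p$, $4/p$), which requires patiently completing the square with the chosen normalization $\eta u_k^{p/2}$ rather than, say, $\eta^2 u_k^{p-1}$; I would do this bookkeeping once and cite it. The $\tau$-terms are routine once one knows the precise regularization, so I would simply state that they follow from Young's inequality applied to the regularizing flux tested against $\eta^2 u_k^{p-1}$.
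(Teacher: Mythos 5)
Your proposal follows essentially the same route as the paper: test with $p\,\eta^2 u_k^{p-1}$ (equivalently $\eta^2 u_k^{p-1}$), expand the diffusion term via the algebraic identity for $|\nabla(\eta u_k^{p/2})|^2$ to produce the coefficients $4(p-1)/p$, $4(p-2)/p$, $4/p$, and for the drift term split $u=u_k+k$, rewrite the $\nabla u_k$-contributions as $\eta^2\nabla a\cdot\nabla(\tfrac{p-1}{p}u_k^p+ku_k^{p-1})$, integrate by parts once more, and invoke $-\Delta a=u$ to land exactly on $(II)$. One small remark: the paper's written proof in fact produces an \emph{equality} without the $\tau$-terms at all (they belong to the regularized scheme of \cite{GZ} and are simply carried along in the statement), so your explicit acknowledgment that the $\tau$-contribution comes from a separate regularizing flux and is absorbed by Young's inequality is, if anything, more complete than what the paper writes.
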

  \begin{proof}
    Consider 
    $$
\psi= p  \; \eta^2\; u^{p-1}_k
 $$
 as test function for (\ref{landau}). A direct computation yields, 
    \begin{align*}
&p \int \eta^2 u^{p-1}_k \partial_t u_k\;dx \\
        & = - p\int (a\nabla u,\nabla (\eta^2 u_k^{p-1}))\;dx
        +p \int (u\nabla a,\nabla (\eta^2 u_k^{p-1}))\;dx\\
         &= \widetilde{\textnormal{(I)}} + \textnormal{(II)}.
    \end{align*}
    Expanding the first integral, we have the expression:
    \begin{align*}
    \int (a\nabla u,\nabla (\eta^2 u_k^{p-1}))\;dx =   \int (p-1) \eta^2 u_k^{p-2}(a\nabla u_k,\nabla u_k)+2 u_k^{p-1}\eta(a\nabla u_k,\nabla \eta)\;dx.
    \end{align*}	  
    Let us rewrite this expression in a more convenient form. Note the elementary identity
    \begin{align*}
      (a\nabla (\eta u_k^{p/2}),\nabla (\eta u_k^{p/2}))  
      & = \frac{p^2}{4} u_k^{p-2}\eta^2 (a\nabla u_k,\nabla u_k)+p\eta u_k^{p-1}(a\nabla u_k,\nabla \eta)+u_k^{p}(a\nabla \eta,\nabla \eta),
    \end{align*}
    and use it to write,
    \begin{align*}		
      & (p-1)\eta^2 u_k^{p-2}(a\nabla u_k,\nabla u_k)+2u_k^{p-1}\eta (a\nabla u_k,\nabla \eta) \\
      & = \frac{4(p-1)}{p^2}(a\nabla (\eta u_k^{p/2}),\nabla (\eta u_k^{p/2}))\\
      & \;\;\;\; - \frac{(2p-4)}{p} u_k^{p-1}\eta(a\nabla u_k,\nabla \eta)-\frac{4(p-1)}{p^2} u_k^p(a\nabla \eta,\nabla \eta).	  	
    \end{align*}
    Further, another elementary identity says
    \begin{align*}	
      u_k^{p-1}\eta (a\nabla u_k,\nabla \eta)  
	  = \frac{2}{p} u_k^{p/2} (a  \nabla (\eta u_k^{p/2}),\nabla \eta )-\frac{2}{p}u_k^p (a\nabla \eta,\nabla \eta). 		
    \end{align*}		
    Combining the above, it follows that
    \begin{align*}	
      & (p-1)\eta^2 u_k^{p-2}(a\nabla u_k,\nabla u_k)+2u_k^{p-1}\eta (a\nabla u_k,\nabla \eta)\\
      & = \frac{4(p-1)}{p^2}(a\nabla (\eta u_k^{p/2}),\nabla (\eta u_k^{p/2}))\\	
      & \;\;\;\;-\frac{4(p-2)}{p^2}u_k^{p/2}(a\nabla (\eta u_k^{p/2}),\nabla \eta)-\frac{4}{p^2}u_k^p(a\nabla \eta,\nabla \eta).
    \end{align*}	
    In particular,
    \begin{align*}	
   \widetilde{\textnormal{(I)}}=    & -\frac{4(p-1)}{p}\int (a\nabla (\eta u_k^{p/2}),\nabla (\eta u_k^{p/2}))\;dx\\
      & +\frac{4(p-2)}{p}\int u_k^{p/2}(a\nabla (\eta u_k^{p/2}),\nabla \eta)\;dx+\frac{4}{p}\int u_k^{p}(a\nabla \eta,\nabla \eta)\;dx.
    \end{align*}	
    Thus,
    \begin{align*}	
      & \frac{d}{dt}\int \eta^2 u_k^p\;dx + \frac{4(p-1)}{p}\int (a\nabla (\eta u_k^{p/2}),\nabla (\eta u_k^{p/2}))\;dx \\
      & = \frac{4(p-2)}{p}\int u_k^{p/2}(a\nabla (\eta u_k^{p/2}),\nabla \eta)\;dx+\frac{4}{p}\int u_k^{p}(a\nabla \eta,\nabla \eta)\;dx\\
      & \;\;\;\;+ p\int (u\nabla a,\nabla (\eta^2u_k^{p-1} ) )\;dx.	   	  	
    \end{align*}	
    We now analyze ${\textnormal{(II)}}$. Since
    \begin{align*}
      (\nabla a,u\nabla (\eta^2 u_k^{p-1})) & =  uu_k^{p-1}(\nabla a,\nabla (\eta^2))+ (p-1)uu_k^{p-2}\eta^2(\nabla a,\nabla u_k)\\
      & = uu_k^{p-1}(\nabla a,\nabla (\eta^2))+(p-1)( u_k^{p-1}+ku_k^{p-2})\eta^2 (\nabla a,\nabla u_k)\\
      & = (u_k^p+ku_k^{p-1})(\nabla a,\nabla (\eta^2))\\
      & \;\;\;\;  +\eta^2 (\nabla a,\nabla ( \frac{p-1}{p}u_k^p+ku_k^{p-1}) ),	    
    \end{align*}
    it follows that
    \begin{align*}
      \textnormal{(II)}  & = p\int (u_k^p+ku_k^{p-1} )(\nabla a,\nabla (\eta^2))\;dx \\
        & \;\;\;\;-p\int \left ( \frac{p-1}{p}u_k^{p}+ku_k^{p-1}\right )\dive(\eta^2\nabla a)\;dx.
    \end{align*}		
    {From the above inequality and the Poisson equation it follows}
    \begin{align*}			  
  	\textnormal{(II)} & = p\int (u_k^p+ku_k^{p-1}) (\nabla a,\nabla (\eta^2))\;dx-\int ((p-1)u_k^p+pku_k^{p-1}) (\nabla a,\nabla (\eta^2))\;dx\\
        & \;\;\;\;+\int u \eta^2 ((p-1)u_k^p+pku_k^{p-1}) \;dx\\
        & = \int u_k^p (\nabla a,\nabla (\eta^2))\;dx+\int u\eta^2 \left ( (p-1)u_k^p+pku_k^{p-1} \right )\;dx.
    \end{align*}

This finishes the proof of the Lemma.
  \end{proof}	

\begin{lemma} \label{lemma:2}
Let $p>1$, then we have the inequality
    \begin{align*}
      & \frac{d}{dt}\int \eta^2 u_k^p\;dx + \frac{(p-1)}{p}\int a|\nabla (\eta u_k^{p/2})|^2\;dx\\
      & \leq {{(p-1)\int \eta^2 uu_k^p\;dx+p k \int \eta^2 u u_k^{p-1}\;dx}} \\
      & \;\;\;\;+C(p) \int u_k^p (a\nabla \eta,\nabla \eta) \;dx-\int u_k^p \eta \textrm{Tr}\;(aD^2\eta))\;dx ,
         \end{align*}
    where $C(p)$ denotes a constant that is bounded when $p>1$.
  \end{lemma}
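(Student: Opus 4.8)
The plan is to read the inequality off directly from the energy identity \eqref{est3-1} of Proposition \ref{prop:energy_identity}, used with $\tau=0$ (the regularization is immaterial here), so that the third term on its left-hand side disappears and the last two terms on its right-hand side vanish:
\[
\partial_t\int \eta^2 u_k^p\,dx + \frac{4(p-1)}{p}\int a|\nabla(\eta u_k^{p/2})|^2\,dx \le \textnormal{(I)} + \textnormal{(II)} .
\]
The last two summands of $\textnormal{(II)}$, namely $(p-1)\int u\eta^2 u_k^p\,dx + pk\int u\eta^2 u_k^{p-1}\,dx$, already occur in the claimed bound and are simply carried along. Everything else amounts to rewriting $\textnormal{(I)}$ together with the first summand $\int u_k^p(\nabla a,\nabla(\eta^2))\,dx$ of $\textnormal{(II)}$ so that the single resulting term still containing $\nabla(\eta u_k^{p/2})$ can be absorbed into the dissipation term on the left.

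First I would integrate by parts in $\int u_k^p(\nabla a,\nabla(\eta^2))\,dx$ so as to move the gradient off $a$: write it as $-\int a\,\dive(u_k^p\nabla(\eta^2))\,dx$, expand $\dive(u_k^p\nabla(\eta^2)) = \nabla(u_k^p)\cdot\nabla(\eta^2) + u_k^p\Delta(\eta^2)$, use $\Delta(\eta^2) = 2|\nabla\eta|^2 + 2\eta\Delta\eta$, and apply the pointwise identity $u_k^{p-1}\eta(a\nabla u_k,\nabla\eta) = \tfrac2p u_k^{p/2}(a\nabla(\eta u_k^{p/2}),\nabla\eta) - \tfrac2p u_k^p(a\nabla\eta,\nabla\eta)$ already derived inside the proof of Proposition \ref{prop:energy_identity}. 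Since $a$ is scalar, $\textnormal{Tr}(aD^2\eta) = a\Delta\eta$, and the outcome is
\[
\int u_k^p(\nabla a,\nabla(\eta^2))\,dx = -4\int u_k^{p/2}(a\nabla(\eta u_k^{p/2}),\nabla\eta)\,dx + c_1\int u_k^p(a\nabla\eta,\nabla\eta)\,dx - c_2\int u_k^p\eta\,\textnormal{Tr}(aD^2\eta)\,dx
\]
with absolute constants $c_1,c_2>0$. Adding this to $\textnormal{(I)} = \tfrac{4(p-2)}{p}\int u_k^{p/2}(a\nabla(\eta u_k^{p/2}),\nabla\eta)\,dx + \tfrac4p\int u_k^p(a\nabla\eta,\nabla\eta)\,dx$, the two integrals $\int u_k^{p/2}(a\nabla(\eta u_k^{p/2}),\nabla\eta)\,dx$ merge with coefficient $\tfrac{4(p-2)}{p}-4 = -\tfrac8p$, while the $\int u_k^p(a\nabla\eta,\nabla\eta)\,dx$ terms merge into one whose coefficient stays bounded for $p>1$.

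Next I would remove the lone cross term $-\tfrac8p\int u_k^{p/2}(a\nabla(\eta u_k^{p/2}),\nabla\eta)\,dx$: writing $a=\sqrt a\sqrt a$, Young's inequality bounds its absolute value by $\delta\int a|\nabla(\eta u_k^{p/2})|^2\,dx + \tfrac{16}{\delta p^2}\int a u_k^p|\nabla\eta|^2\,dx$, and the choice $\delta=\tfrac{3(p-1)}p$ leaves exactly $\tfrac{4(p-1)}p-\delta = \tfrac{p-1}p$ of the dissipation on the left, at the price of a contribution $\tfrac{16}{3p(p-1)}$ to the constant. Bundling every multiple of $\int u_k^p(a\nabla\eta,\nabla\eta)\,dx$ into one constant $C(p)$ --- finite for each $p>1$, its only singular behaviour being the $(p-1)^{-1}$ blow-up as $p\to 1^+$ forced by Young's inequality --- and writing $\partial_t = \tfrac{d}{dt}$ (legitimate since $\eta=\eta(x)$ is time-independent), one arrives at the asserted inequality.

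The computation is purely algebraic once \eqref{est3-1} is available, so there is no genuine analytic obstacle; the only point that wants care is the bookkeeping of the last step, namely verifying that the coefficient $\tfrac{4(p-1)}p$ of the good dissipation term in \eqref{est3-1} is large enough that, once Young's inequality has absorbed the cross term, a strictly positive multiple of $\int a|\nabla(\eta u_k^{p/2})|^2\,dx$ still remains on the left --- this is precisely where the hypothesis $p>1$ is used in an essential way --- and tracking the ensuing $p$-dependence of $C(p)$.
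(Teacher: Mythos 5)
Your proof is correct and follows essentially the same route as the paper: start from Proposition~\ref{prop:energy_identity}, integrate the $\int u_k^p(\nabla a,\nabla(\eta^2))\,dx$ term by parts onto $a$, rewrite the resulting integrals using $\eta\nabla u_k^{p/2}=\nabla(\eta u_k^{p/2})-u_k^{p/2}\nabla\eta$, and absorb the cross term(s) into the dissipation by Young's inequality, retaining a $\tfrac{p-1}{p}$ multiple of $\int a|\nabla(\eta u_k^{p/2})|^2$. The only cosmetic difference is that you merge the two cross terms from $\textnormal{(I)}$ and from the integrated-by-parts piece of $\textnormal{(II)}$ into a single one of coefficient $-8/p$ before applying Young once, whereas the paper handles them separately via \eqref{est2-1} and \eqref{est1-1}; your remark that the $\tau$-terms in \eqref{est3-1} are vestigial (set $\tau=0$) is also consistent with how the paper uses that estimate.
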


  \begin{proof}	
    We proceed to bound from above the first term $\textnormal{(I)}$ and the first term of $\textnormal{(II)}$ resulting from Proposition \ref{prop:energy_identity}.  The aim is to estimate these terms as 
    \begin{align*}
     \frac{4(p-2)}{p}\int u_k^{p/2}(a\nabla (\eta u_k^{p/2},\nabla \eta)\;dx& +\int u_k^p (\nabla a,\nabla (\eta^2))\;dx  \\
     \le & c_1 \int (a\nabla (\eta u_k^{p/2}),\nabla (\eta u_k^{p/2}))\;dx + \textrm{\em lower order terms,}
\end{align*}
    where $c_1 < \frac{4(p-1)}{p}$.
   For the first term we use Cauchy-Schwarz inequality 
    \begin{align}
      & \left | \frac{4(p-2)}{p}(a\nabla (\eta u_k^{p/2}),u_k^{p/2}\nabla \eta) \right | \nonumber \\
      & \leq \frac{2(p-1)}{p}(a\nabla (\eta u_k^{p/2}),\nabla (\eta u_k^{p/2}))+ \frac{2(p-2)^2}{p(p-1)}u_k^p(a\nabla \eta,\nabla \eta). \label{est2-1}
    \end{align}			
    For the first term in $\textnormal{(II)}$ we use the identity
    \begin{align*}
      \dive(a u_k^p \nabla (\eta^2)) = a \dive(u_k^p\nabla (\eta^2))+u_k^p(\nabla a,\nabla (\eta^2)),
    \end{align*} 
    and conclude that
    \begin{align*}
      \int u_k^p (\nabla a,\nabla (\eta^{2}))\;dx & = -\int a \dive(u_k^p\nabla (\eta^2))\;dx\\
       & = -\int a u_k^p \Delta (\eta^2)\;dx- \int (a\nabla u_k^p,\nabla \eta^2)\;dx.
    \end{align*}
    Since
    \begin{align*}
       \eta \nabla u_k^{p/2} = \nabla (\eta u_k^{p/2})- u_k^{p/2}\nabla \eta,	
    \end{align*}
    {Young's inequality} yields
    \begin{align*}
       &{-}\int (a \nabla u_k^p,\nabla \eta^2)\;dx ~ {= -4\int u_k^{p/2}(a\eta\na u_k^{p/2}, \nabla\eta)}\\
       &\qquad = {-}4\int u_k^{p/2}(a \nabla (\eta u_k^{p/2}),\nabla \eta)\; dx +4\int u_k^p (a\nabla \eta,\nabla \eta)\;dx \\
       &\qquad \leq 2\varepsilon\int (a\nabla (\eta u_k^{p/2}),\nabla (\eta u_k^{p/2}) )\;dx + \left(\frac{2}{\varepsilon}+4\right)\int u_k^{p}(a\nabla \eta,\nabla\eta)\;dx .
    \end{align*}
    Thus
    \begin{align}\label{est1-1}
      \int u_k^p (\nabla a,\nabla (\eta^{2}))\;dx  \le &-\int u_k^p \textrm{Tr} (a D^2(\eta^2))\;dx +  2\varepsilon\int (a\nabla (\eta u_k^{p/2}),\nabla (\eta u_k^{p/2}) )\;dx \nonumber\\
    &  + \left(\frac{2}{\varepsilon}+4\right)\int u_k^{p}(a\nabla \eta,\nabla\eta)\;dx.
      \end{align}
  Substituting (\ref{est1-1}) and  (\ref{est2-1}) into  (\ref{est3-1}) we get by choosing $\varepsilon < \frac{p-1}{2p}$
   \begin{align*}
       \frac{d}{dt}\int \eta^2 u_k^p\;dx &+ \frac{(p-1)}{p} \int (a\nabla (\eta u_k^{p/2}),\nabla (\eta u_k^{p/2}))\;dx \\
      \le &C(p)  \int u_k^{p}(a\nabla \eta,\nabla \eta)\;dx+(p-1)\int \eta^2 u u_k^p\;dx\\
      &+pk\int  \eta^2  u u_k^{p-1} \;dx -\int u_k^p \textrm{Tr} (a D^2(\eta^2))\;dx. 
    \end{align*}   
  \end{proof}

\begin{lemma}\label{lemma3}
 We have
\begin{align*}
(p-1)\int_t^T \int \eta^2 uu_k^p\;dxds  \le & \; \varepsilon (p-1)\int_t^T \int_{Q_R} a |\nabla(\eta u_k^{p/2})|^2 \;dxds + C(R,\varepsilon,p)\int_t^T\int_{Q_R}	\eta^2 u_k^{p}\;dxds,
\\
 p k \int_t^T\int \eta^2 u u_k^{p-1}\;dxds  \le & \; p \varepsilon \int_t^T  \int a |\nabla(\eta u_k^{p/2})|^2 \;dxds + C(R,\varepsilon,p)\int_t^T\int 	\eta^2 u_k^{p}\;dxds\\
& +\; \;2p k^2  \int_0^T  \int \eta^2 u_k^{p-1}\;dxds. 
\end{align*}
\end{lemma}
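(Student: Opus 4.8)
The plan is to derive both inequalities directly from the $\varepsilon$-Poincar\'e inequality \eqref{eqn:epsilon_Poincare_inequality strongerII}, applied each time to the test function $\phi=\eta u_k^{p/2}$, combined with the elementary pointwise identity $u=u_k+k$ valid on $\{u_k>0\}$ (recall $u_k=(u-k)_+$). Throughout, $Q_R$ denotes a space--time cylinder containing the support of the cut-off $\eta$. One first checks that $\phi=\eta u_k^{p/2}$ is an admissible test function in \eqref{eqn:epsilon_Poincare_inequality strongerII}: it is compactly supported, and by the energy estimates available for $u$ (and the range $1<p<2$) both $\int a[u]|\nabla(\eta u_k^{p/2})|^2\,dx$ and $\int(\eta u_k^{p/2})^2\,dx=\int\eta^2 u_k^p\,dx$ are finite.

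For the first inequality, since $(\eta u_k^{p/2})^2=\eta^2 u_k^p$, inequality \eqref{eqn:epsilon_Poincare_inequality strongerII} gives, at each fixed time,
\[
\int \eta^2 u u_k^p\,dx=\int u\,(\eta u_k^{p/2})^2\,dx\le\varepsilon\int a|\nabla(\eta u_k^{p/2})|^2\,dx+C_\varepsilon\int\eta^2 u_k^p\,dx .
\]
Multiplying by $p-1$, integrating in $s$ over $(t,T)$, and recalling that all integrands are supported in $Q_R$, yields the first claimed bound with $C(R,\varepsilon,p)$ absorbing $(p-1)C_\varepsilon$ (any $R$-dependence being inherited from the localized form of \eqref{eqn:epsilon_Poincare_inequality strongerII}).

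For the second inequality, I would first use $u=u_k+k$ on $\{u_k>0\}$ to write $u u_k^{p-1}=u_k^p+k u_k^{p-1}$ pointwise, so that
\[
pk\int\eta^2 u u_k^{p-1}\,dx=pk\int\eta^2 u_k^p\,dx+pk^2\int\eta^2 u_k^{p-1}\,dx .
\]
For the first term on the right I would split the spatial domain: on $\{u_k\le k\}$ one has $u_k^p\le k u_k^{p-1}$, so that contribution is at most $pk^2\int\eta^2 u_k^{p-1}\,dx$; on $\{u_k>k\}$ one has $k\le u_k\le u$, hence $pk\int_{\{u_k>k\}}\eta^2 u_k^p\,dx\le p\int\eta^2 u u_k^p\,dx$, to which \eqref{eqn:epsilon_Poincare_inequality strongerII} is applied once more with $\phi=\eta u_k^{p/2}$. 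Collecting the pieces gives, at each time,
\[
pk\int\eta^2 u u_k^{p-1}\,dx\le p\varepsilon\int a|\nabla(\eta u_k^{p/2})|^2\,dx+pC_\varepsilon\int\eta^2 u_k^p\,dx+2pk^2\int\eta^2 u_k^{p-1}\,dx ,
\]
and integrating over $(t,T)$, while bounding $\int_t^T\le\int_0^T$ in the last (nonnegative) term, yields the second claim.

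I do not anticipate a genuine obstacle: the argument is essentially bookkeeping around two invocations of \eqref{eqn:epsilon_Poincare_inequality strongerII}. The points that deserve care are the admissibility of $\eta u_k^{p/2}$ as a test function in \eqref{eqn:epsilon_Poincare_inequality strongerII} (which is where the a priori regularity of $u$ and $1<p<2$ enter), the mild non-smoothness of $u_k^{p/2}$ near $\{u_k=0\}$ when $p<2$, handled as elsewhere in this section by treating the computation as formal or by a standard approximation of $s\mapsto s^{p/2}$, and the power mismatch between the $u_k^{p-1}$ appearing in the source term and the $u_k^p=(u_k^{p/2})^2$ needed to run \eqref{eqn:epsilon_Poincare_inequality strongerII} --- precisely what the splitting $u=u_k+k$, with its harmless extra $k^2 u_k^{p-1}$ term, is there to reconcile.
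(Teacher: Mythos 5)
Your proposal is correct and follows essentially the same route as the paper: both inequalities come from the $\varepsilon$-Poincar\'e inequality \eqref{eqn:epsilon_Poincare_inequality strongerII} with $\phi=\eta u_k^{p/2}$, and the second is handled by a splitting according to whether $u_k$ is above or below the level $k$. The only cosmetic difference is that you first expand $u u_k^{p-1}=u_k^p+ku_k^{p-1}$ via $u=u_k+k$ before splitting, whereas the paper splits the original integrand $u\,u_k^{p-1}$ directly over $\{u_k\ge k\}$ and $\{u_k\le k\}$ (using $k u_k^{p-1}\le u_k^p$ on the former and $u\le 2k$ on the latter); both routes yield the same final bound with the same $2pk^2$ term.
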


\begin{proof}
We use here the $\varepsilon$-Poincare's inequality (\ref{eqn:epsilon_Poincare_inequality strongerII}) with 
$$
\phi = \eta u_k^{p/2}
$$
and get 
 \begin{align*}
  \begin{array}{l}
	 \int \eta^2 uu_k^p\;dx \leq \varepsilon \int a |\nabla(\eta u_k^{p/2})|^2 \;dx + C(R,\varepsilon)\int 	\eta^2 u_k^{p}\;dx.
  \end{array}	  
\end{align*}

For the second inequality we get 
\begin{align*}
 p k \int_t^T\int \eta^2 u u_k^{p-1}\;dxds= &\;  p k \int_t^T\int \eta^2 [u\chi_{\{u_k\ge k\}} +  u\chi_{\{u_k\le k\}}]u_k^{p-1}\;dxds\\
= & \; p k \int_t^T\int \eta^2 u\chi_{\{u_k\ge k\}} u_k^{p-1}\;dxds +  p k \int_t^T\int \eta^2\underbrace{u\chi_{\{u_k\le k\}}}_{u\le 2k}u_k^{p-1}\;dxds\\
 \le &\; p\int_t^T \int  \eta^2 u u_k^{p}\;dxds + \;2p k^2  \int_0^T  \int \eta^2 u_k^{p-1}\;dxds \\
\le &\; p \varepsilon \int_t^T  \int a |\nabla(\eta u_k^{p/2})|^2 \;dxds + C(R,\varepsilon,p)\int_t^T\int 	\eta^2 u_k^{p}\;dxds\\
& +\; \;2p k^2  \int_0^T  \int \eta^2 u_k^{p-1}\;dxds 
 \end{align*}
 using  (\ref{eqn:epsilon_Poincare_inequality strongerII}) once more. 

\end{proof}

\begin{cor}  \label{cor_iterat}
  Fix times $0<T_1<T_2<T_3<T$, $p>1$ and a cut-off function $\eta(v)$. Then, we have the following inequality
  \begin{align*}
   \sup \limits_{T_2 \leq t\leq T_3} \left \{ \int (\eta u_k^{p/2})^2\;dx\right\} &+ \frac{(p-1)}{4p}  \int_{T_2}^{T_3}\int a|\nabla (\eta u_k^{p/2})|^2\;dxdt\\
      \le  &\; \left(\frac{1}{T_2-T_1} + C(p,\varepsilon, R)\right)\int_{T_1}^{T_3} \int \eta^2u_k^p\;dxdt \\
      &+2 p k^2 \int_{T_1}^{T_3}\int \eta^2 u_k^{p-1}\;dxdt\\
            &+C(p) \int_{T_1}^{T_3}\int  u_k^p (a\nabla \eta,\nabla \eta) \;dxdt+ \int_{T_1}^{T_3}\int a u_k^p \eta |\Delta \eta|\;dxdt. 	
    \end{align*}	

    \end{cor}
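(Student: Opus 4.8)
The plan is a standard Caccioppoli/De Giorgi energy estimate: combine the differential inequality of Lemma \ref{lemma:2} with the absorption estimates of Lemma \ref{lemma3}, integrate in time, and then \emph{average} the initial time over $[T_1,T_2]$ — which is exactly what produces the $1/(T_2-T_1)$ weight — while taking the supremum over the final time in $[T_2,T_3]$. All the genuinely nonlinear work, namely controlling $\int\eta^2 uu_k^p\,dx$ and $\int\eta^2 uu_k^{p-1}\,dx$ by a small multiple of the weighted Dirichlet energy $\int a|\nabla(\eta u_k^{p/2})|^2\,dx$ via the $\varepsilon$-Poincar\'e inequality \eqref{eqn:epsilon_Poincare_inequality strongerII}, has already been done in Lemma \ref{lemma3}; here only bookkeeping remains.

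First I would fix $T_1\le s\le T_2\le t\le T_3$ and integrate the inequality of Lemma \ref{lemma:2} in time over $(s,t)$. Since $a=a[u]\ge 0$ is a scalar, $\textrm{Tr}(aD^2\eta)=a\Delta\eta$ and hence $-\int u_k^p\eta\,\textrm{Tr}(aD^2\eta)\,dx\le\int a u_k^p\eta|\Delta\eta|\,dx$, so that
\begin{align*}
\int\eta^2 u_k^p(t)\,dx &+ \frac{p-1}{p}\int_s^t\int a|\nabla(\eta u_k^{p/2})|^2\,dxd\tau \;\le\; \int\eta^2 u_k^p(s)\,dx \\
&+ (p-1)\int_s^t\int\eta^2 uu_k^p\,dxd\tau + pk\int_s^t\int\eta^2 uu_k^{p-1}\,dxd\tau \\
&+ C(p)\int_s^t\int u_k^p(a\nabla\eta,\nabla\eta)\,dxd\tau + \int_s^t\int a u_k^p\eta|\Delta\eta|\,dxd\tau .
\end{align*}
The proof of Lemma \ref{lemma3} only uses \eqref{eqn:epsilon_Poincare_inequality strongerII} pointwise in time, so it applies verbatim on $(s,t)$ and bounds $(p-1)\int_s^t\int\eta^2 uu_k^p$ and $pk\int_s^t\int\eta^2 uu_k^{p-1}$ by $\varepsilon(2p-1)\int_s^t\int a|\nabla(\eta u_k^{p/2})|^2$ plus $C(R,\varepsilon,p)\int_s^t\int\eta^2 u_k^p$ plus $2pk^2\int_s^t\int\eta^2 u_k^{p-1}$. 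Choosing the free parameter $\varepsilon$ in \eqref{eqn:epsilon_Poincare_inequality strongerII} small enough, depending only on $p$, that $\varepsilon(2p-1)\le\frac{3(p-1)}{4p}$, the gradient contribution from the right-hand side is absorbed into the left, leaving the coefficient $\frac{p-1}{4p}$ in front of $\int_s^t\int a|\nabla(\eta u_k^{p/2})|^2$.

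Enlarging the remaining time integrals from $(s,t)$ to $(T_1,T_3)$ and writing $\mathcal R$ for the sum $C(p,\varepsilon,R)\int_{T_1}^{T_3}\int\eta^2 u_k^p + 2pk^2\int_{T_1}^{T_3}\int\eta^2 u_k^{p-1} + C(p)\int_{T_1}^{T_3}\int u_k^p(a\nabla\eta,\nabla\eta) + \int_{T_1}^{T_3}\int a u_k^p\eta|\Delta\eta|$, we arrive at
$$
\int\eta^2 u_k^p(t)\,dx + \frac{p-1}{4p}\int_s^t\int a|\nabla(\eta u_k^{p/2})|^2\,dxd\tau \;\le\; \int\eta^2 u_k^p(s)\,dx + \mathcal R , \qquad T_1\le s\le T_2\le t\le T_3 .
$$
Dropping the gradient term and taking $\sup_{t\in[T_2,T_3]}$ gives $\sup_{t\in[T_2,T_3]}\int\eta^2 u_k^p(t)\,dx\le\int\eta^2 u_k^p(s)\,dx+\mathcal R$; taking instead $t=T_3$ and using $\int_s^{T_3}\ge\int_{T_2}^{T_3}$ (the integrand being nonnegative and $s\le T_2$) gives $\frac{p-1}{4p}\int_{T_2}^{T_3}\int a|\nabla(\eta u_k^{p/2})|^2\le\int\eta^2 u_k^p(s)\,dx+\mathcal R$. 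Adding these two and then averaging in $s$ over $[T_1,T_2]$ — using $\frac{1}{T_2-T_1}\int_{T_1}^{T_2}\int\eta^2 u_k^p\,dxds\le\frac{1}{T_2-T_1}\int_{T_1}^{T_3}\int\eta^2 u_k^p\,dxdt$ and recalling $(\eta u_k^{p/2})^2=\eta^2 u_k^p$ — yields the asserted inequality, the harmless numerical factor $2$ being absorbed into the constants.

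I do not expect a substantial obstacle: the hard estimate, the $\varepsilon$-Poincar\'e absorption of the bad terms $\int\eta^2 uu_k^p$ and $\int\eta^2 uu_k^{p-1}$, is already contained in Lemma \ref{lemma3}. The only points requiring a little care are that the smallness threshold for $\varepsilon$ stay uniform as $p$ ranges over a bounded interval, and that the time integration of Lemma \ref{lemma:2} be justified (treating $t\mapsto\int\eta^2 u_k^p(t)\,dx$ as absolutely continuous, or working with Steklov averages of the weak formulation); both are routine. Finally, note that the term $2pk^2\int_{T_1}^{T_3}\int\eta^2 u_k^{p-1}\,dx$, which carries $u_k^{p-1}$ rather than $u_k^p$, is merely transported through this corollary and is precisely the term that will later demand a separate H\"older argument on a sublevel set in the De Giorgi iteration.
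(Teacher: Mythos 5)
Your proof follows the same strategy as the paper's: integrate the Caccioppoli-type inequality of Lemma~\ref{lemma:2} in time, average the starting time over $[T_1,T_2]$ to generate the $1/(T_2-T_1)$ weight, take the supremum over the ending time in $[T_2,T_3]$, and absorb the dangerous terms $\int\eta^2 u u_k^p$ and $\int\eta^2 u u_k^{p-1}$ via Lemma~\ref{lemma3} with $\varepsilon$ small.

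The one real difference is the order of operations. You apply the absorption of Lemma~\ref{lemma3} on the interval $(s,t)$ \emph{before} averaging, while the paper averages first and invokes Lemma~\ref{lemma3} as ``the last step'', after the time integrals on the right have been enlarged to $(T_1,T_3)$. Your ordering is actually the cleaner one: after averaging, the retained gradient term on the left is only $\frac{p-1}{p}\int_{T_2}^{T_3}\int a|\nabla(\eta u_k^{p/2})|^2$, whereas applying Lemma~\ref{lemma3} over $(T_1,T_3)$ produces an $\varepsilon\int_{T_1}^{T_3}\int a|\nabla(\eta u_k^{p/2})|^2$ term on the larger window; the part over $(T_1,T_2)$ then has nothing on the left to absorb it without an extra remark (e.g.\ that one may retain the full averaged gradient before shrinking). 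Absorbing on $(s,t)$ first, as you do, keeps both gradient integrals over the same interval and avoids this. The factor of $2$ you flag in the add-and-average step is present, silently, in the paper's version as well when passing from the pointwise-in-$t_2$ inequality to the supremum plus the full gradient integral; it is harmless for the De~Giorgi iteration. Your smallness threshold $\varepsilon(2p-1)\le \tfrac{3(p-1)}{4p}$ and the paper's $\varepsilon<\tfrac{p-1}{4p^2}$ are the same computation up to a conservative overestimate; the latter implies the former for all $p>1$.
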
  
  
\begin{proof}  
  We start with the bound found in Lemma \ref{lemma:2} 
  \begin{align*}
      & \frac{d}{dt}\int \eta^2 u_k^p\;dx + \frac{(p-1)}{p}\int a|\nabla (\eta u_k^{p/2})|^2\;dx\\
      & \leq {{(p-1)\int \eta^2 uu_k^p\;dx+p k \int \eta^2 u u_k^{p-1}\;dx}} \\
      & \;\;\;\;+C(p) \int u_k^p (a\nabla \eta,\nabla \eta) \;dx-\int a u_k^p \eta \Delta\eta\;dx .
         \end{align*}
         
         

    Integrating this inequality from $t_1$ to $t_2$ shows that the term  
    \begin{align*}
      \int \eta^2 u_k^p(t_2)\;dx-\int \eta^2u_k^p(t_1)\;dx + \frac{(p-1)}{p}\int_{t_1}^{t_2}\int a|\nabla (\eta u_k^{p/2})|^2\;dxdt
    \end{align*}	
    is bounded by 
    \begin{align*}	
     (p-1) \int_{t_1}^{t_2}\int \eta^2 uu_k^p\;dxdt+p k  \int_{t_1}^{t_2}\int \eta^2 u u_k^{p-1}\;dxdt\\
      +C(p) \int_{t_1}^{t_2}\int  u_k^p (a\nabla \eta,\nabla \eta) \;dxdt- \int_{t_1}^{t_2}\int a u_k^p \eta \Delta \eta\;dxdt. 	
    \end{align*}	
    
For a fixed $t_2 \in (T_2,T_3)$, we take the average with respect to $t_1\in (T_1,T_2)$ in both sides of the inequality. This yields 
  \begin{align*}
  \frac{1}{T_2-T_1}\int_{T_1}^{T_2} \int \eta^2 u_k^p(t_2)\;dxdt_1 &+ \frac{(p-1)}{p}\frac{1}{T_2-T_1}\int_{T_1}^{T_2}  \int_{t_1}^{t_2}\int a|\nabla (\eta u_k^{p/2})|^2\;dxdtdt_1\\
      \le  &\; \frac{1}{T_2-T_1}\int_{T_1}^{T_2} \int \eta^2u_k^p(t_1)\;dxdt_1\\
       & + (p-1) \frac{1}{T_2-T_1}\int_{T_1}^{T_2} \int_{t_1}^{t_2}\int \eta^2 uu_k^p\;dxdtdt_1\\
      &+p k  \frac{1}{T_2-T_1}\int_{T_1}^{T_2}\int_{t_1}^{t_2}\int \eta^2 u u_k^{p-1}\;dxdtdt_1  \\
      &+C(p) \frac{1}{T_2-T_1}\int_{T_1}^{T_2}\int_{t_1}^{t_2}\int  u_k^p (a\nabla \eta,\nabla \eta) \;dxdtdt_1\\
     & - \frac{1}{T_2-T_1}\int_{T_1}^{T_2} \int_{t_1}^{t_2}\int a u_k^p \eta \Delta \eta\;dxdtdt_1,
    \end{align*}	 
which implies 
\begin{align*}
  \int \eta^2 u_k^p(t_2)\;dx &+ \frac{(p-1)}{p}  \int_{T_2}^{t_2}\int a|\nabla (\eta u_k^{p/2})|^2\;dxdt\\
      \le  &\; \frac{1}{T_2-T_1}\int_{T_1}^{T_2} \int \eta^2u_k^p(t)\;dxdt \\
      &+ (p-1) \int_{T_1}^{t_2}\int \eta^2 uu_k^p\;dxdt+p k \int_{T_1}^{t_2}\int \eta^2 u u_k^{p-1}\;dxdt  \\
      &+C(p) \int_{T_1}^{t_2}\int  u_k^p (a\nabla \eta,\nabla \eta) \;dxdt + \int_{T_1}^{t_2}\int a u_k^p \eta |\Delta \eta|\;dxdt. 	
    \end{align*}	
   Since this holds for every $t_2\in (T_2,T_3)$, this implies the inequality
  \begin{align*}
   \sup \limits_{T_2 \leq t\leq T_3} \left \{ \int \eta^2 u_k^p(t)\;dx\right\} &+ \frac{(p-1)}{p}  \int_{T_2}^{T_3}\int a|\nabla (\eta u_k^{p/2})|^2\;dxdt\\
      \le  &\; \frac{1}{T_2-T_1}\int_{T_1}^{T_3} \int \eta^2u_k^p(t)\;dxdt \\
      &+ (p-1) \int_{T_1}^{T_3}\int \eta^2 uu_k^p\;dxdt+p k \int_{T_1}^{T_3}\int \eta^2 u u_k^{p-1}\;dxdt\\
            &+C(p) \int_{T_1}^{T_3}\int  u_k^p (a\nabla \eta,\nabla \eta) \;dxdt + \int_{T_1}^{T_3}\int a u_k^p \eta |\Delta \eta|\;dxdt. 	
    \end{align*}	
  As the last step we use Lemma \ref{lemma3} with $\varepsilon < \frac{p-1}{4p^2}$ and get 
   \begin{align*}
   \sup \limits_{T_2 \leq t\leq T_3} \left \{ \int \eta^2 u_k^p(t)\;dx\right\} &+ \frac{(p-1)}{4p}  \int_{T_2}^{T_3}\int a|\nabla (\eta u_k^{p/2})|^2\;dxdt\\
      \le  &\; \frac{1}{T_2-T_1}\int_{T_1}^{T_3} \int \eta^2u_k^p(t)\;dxdt \\
      &+ C(p,\varepsilon, R) \int_{T_1}^{T_3}\int \eta^2 u_k^p\;dxdt+2 p k^2 \int_{T_1}^{T_3}\int \eta^2 u_k^{p-1}\;dxdt\\
            &+C(p) \int_{T_1}^{T_3}\int  u_k^p (a\nabla \eta,\nabla \eta) \;dxdt+ \int_{T_1}^{T_3}\int a u_k^p \eta |\Delta \eta|\;dxdt. 	
    \end{align*}

 \end{proof}

 \begin{cor}\label{cor_final_lp}
 We have 
 \begin{align*}
   \sup \limits_{T_2 \leq t\leq T_3} \left \{ \int  u^p(t)\;dx\right\} &+ \frac{(p-1)}{4p}  \int_{T_2}^{T_3}\int a|\nabla ( u^{p/2})|^2\;dxdt\\
      \le  &\; \left(\frac{1}{T_2-T_1}+ C(p,\varepsilon)\right)\int_{T_1}^{T_3} \int u^p(t)\;dxdt. 	
    \end{align*}	
 
 \end{cor}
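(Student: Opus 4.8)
The plan is to deduce Corollary~\ref{cor_final_lp} from Corollary~\ref{cor_iterat} by choosing the truncation level $k=0$ and letting the spatial cut-off tend to the constant function $1$. When $k=0$ one has $u_k=u$ (recall $u\ge 0$), so that $u_k^p=u^p$ and $\nabla(\eta u_k^{p/2})=\nabla(\eta u^{p/2})$, and the term $2pk^2\int\eta^2 u_k^{p-1}\,dx$ appearing in Corollary~\ref{cor_iterat} vanishes identically. Hence, for every admissible cut-off $\eta$, Corollary~\ref{cor_iterat} with $k=0$ reads
\begin{align*}
\sup_{T_2\le t\le T_3}\int (\eta u^{p/2})^2\,dx &+\frac{(p-1)}{4p}\int_{T_2}^{T_3}\int a|\nabla(\eta u^{p/2})|^2\,dx\,dt\\
&\le\Big(\frac1{T_2-T_1}+C(p,\varepsilon)\Big)\int_{T_1}^{T_3}\int\eta^2 u^p\,dx\,dt\\
&\quad+C(p)\int_{T_1}^{T_3}\int u^p(a\nabla\eta,\nabla\eta)\,dx\,dt+\int_{T_1}^{T_3}\int a u^p\,\eta|\Delta\eta|\,dx\,dt,
\end{align*}
where I have already anticipated that the constant produced by the $\varepsilon$-Poincar\'e inequality \eqref{eqn:epsilon_Poincare_inequality strongerII} (used in Lemma~\ref{lemma3}, which feeds into Corollary~\ref{cor_iterat}) does not depend on the support of the test function, so that no ball radius survives in $C(p,\varepsilon)$.

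Next I would insert a concrete sequence of cut-offs. Fix $\zeta\in C_c^\infty(\R^3)$ with $0\le\zeta\le1$, $\zeta\equiv1$ on $B_1$ and $\mathrm{supp}\,\zeta\subset B_2$, and set $\eta_n(x):=\zeta(x/n)$; then $\|\nabla\eta_n\|_{\infty}\le C/n$, $\|\Delta\eta_n\|_{\infty}\le C/n^2$, both $\nabla\eta_n$ and $\Delta\eta_n$ are supported in the annulus $\{n\le|x|\le 2n\}$, and $\eta_n\nearrow1$ pointwise. Applying the displayed inequality with $\eta=\eta_n$ and letting $n\to\infty$, I pass to the limit term by term: on the left-hand side, monotone convergence gives $\int(\eta_n u^{p/2})^2\,dx\nearrow\int u^p\,dx$ for each fixed $t$, and since the sequence is monotone in $n$ this forces $\sup_t\int(\eta_n u^{p/2})^2\,dx\nearrow\sup_t\int u^p\,dx$; likewise the weighted Dirichlet term increases to $\frac{(p-1)}{4p}\int_{T_2}^{T_3}\int a|\nabla u^{p/2}|^2\,dx\,dt$. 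On the right-hand side, the first term increases to $\big(\frac1{T_2-T_1}+C(p,\varepsilon)\big)\int_{T_1}^{T_3}\int u^p\,dx\,dt$ by monotone convergence, while the two remaining terms are controlled by $C(p)\,n^{-2}\int_{T_1}^{T_3}\int_{\{n\le|x|\le 2n\}}a u^p\,dx\,dt$, which tends to $0$ as $n\to\infty$ provided $a u^p$ has enough spatial decay (e.g.\ $au^p\in L^1((T_1,T_3)\times\R^3)$), which holds at the formal level assumed throughout this section. Collecting these limits gives exactly the asserted inequality.

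The only genuinely non-routine point is this last one: one must ensure that the contributions localized on the dyadic annuli $\{n\le|x|\le 2n\}$, which carry the two extra powers of $1/n$ coming from $|\nabla\eta_n|^2$ and $|\Delta\eta_n|$, really do vanish in the limit. This is automatic once $a u^p$ is integrable at infinity (for the even solutions of Section~\ref{even_sol} this kind of control is available from the a priori bounds established there, and in the conditional setting it follows once the $L^\infty$ bounds of Theorem~\ref{thr.reg} are in force); at the formal level it is simply part of the running assumption that $u$ and all associated quantities decay suitably. The same remark explains why \eqref{eqn:epsilon_Poincare_inequality strongerII} may be applied on all of $\R^3$ with a constant independent of $n$, so that no spatial radius is left in the final constant $C(p,\varepsilon)$. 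Everything else is monotone convergence together with the algebraic simplification afforded by $k=0$; alternatively one may keep $k>0$, first send $\eta_n\to1$, and then let $k\to0^+$, the term $2pk^2\int u^{p-1}\,dx$ vanishing by dominated convergence.
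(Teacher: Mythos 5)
Your proof is correct and takes essentially the same route as the paper, whose one-line argument simply substitutes $\eta\equiv 1$ and $k=0$ into Corollary~\ref{cor_iterat} (so that the $\nabla\eta$, $\Delta\eta$ and $k^2$ terms disappear). You merely make explicit the detail the paper treats formally --- approximating $\eta\equiv 1$ by a sequence of cutoffs and checking that the annulus contributions vanish in the limit --- which is the rigorous reading of that substitution, not a different approach.
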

 \begin{proof}
 It is a consequence of Corollary \ref{cor_iterat} if $\eta =1$ and $k=0$. 
 \end{proof}
 
 \begin{lemma}[Gain in integrability]\label{gain_n}
 For each $p>1$ and integer $n\ge 0$ we have 
  \begin{align*}
   \sup \limits_{T/4 \leq t\leq T} \left \{ \int  u^{p+n}(t)\;dx\right\}  \le C(p,n) \left(\frac{1}{T}+1\right)^{n+1} \int_{0}^{T} \int u^p(t)\;dxdt. 
 \end{align*}
 \end{lemma}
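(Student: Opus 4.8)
The plan is to prove the estimate by a finite induction on $n$ in which the exponent is raised by exactly one unit at each stage while the bound stays \emph{linear} in $\int_0^T\IRd u^p\,dx\,dt$; the two ingredients are the energy inequality of Corollary~\ref{cor_final_lp} and a direct use of the $\varepsilon$-Poincar\'e inequality \eqref{eqn:epsilon_Poincare_inequality strongerII}. The base case $n=0$ is immediate: Corollary~\ref{cor_final_lp} applied with $(T_1,T_2,T_3)=(0,T/4,T)$ gives $\sup_{T/4\le t\le T}\IRd u^p\,dx\le(4/T+C(p))\int_0^T\IRd u^p\,dx\,dt\le C(p)(1/T+1)\int_0^T\IRd u^p\,dx\,dt$ (integrating the energy inequality of Lemma~\ref{lemma:2} from the initial time).

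The engine of the induction is the following one-step gain in integrability: for every $q>1$ and all $0\le S_1<S_2<S_3$,
\[
\int_{S_2}^{S_3}\IRd u^{q+1}\,dx\,dt\ \le\ C(q)\Bigl(\tfrac{1}{S_2-S_1}+1\Bigr)\int_{S_1}^{S_3}\IRd u^{q}\,dx\,dt .
\]
To obtain this I would apply \eqref{eqn:epsilon_Poincare_inequality strongerII} with the (time-dependent) test function $\phi=u^{q/2}$, for a.e.\ fixed $t$; since $u\phi^2=u^{q+1}$, integrating in time over $(S_2,S_3)$ gives $\int_{S_2}^{S_3}\IRd u^{q+1}\le\varepsilon\int_{S_2}^{S_3}\IRd a[u]\,|\nabla u^{q/2}|^2+C_\varepsilon\int_{S_2}^{S_3}\IRd u^{q}$. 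The gradient term would then be controlled by Corollary~\ref{cor_final_lp} with exponent $q$ on the triple $(S_1,S_2,S_3)$, which bounds $\int_{S_2}^{S_3}\IRd a\,|\nabla u^{q/2}|^2$ by $\tfrac{4q}{q-1}(\tfrac{1}{S_2-S_1}+C(q))\int_{S_1}^{S_3}\IRd u^{q}$, while for the remaining term one uses $\int_{S_2}^{S_3}\IRd u^{q}\le\int_{S_1}^{S_3}\IRd u^{q}$; fixing $\varepsilon$ once and for all yields the claim. If $\int_0^T\IRd u^p\,dx\,dt=\infty$ there is nothing to prove, and otherwise the admissibility of $\phi=u^{q/2}$ in \eqref{eqn:epsilon_Poincare_inequality strongerII} is justified exactly as in the passage from Corollary~\ref{cor_iterat} to Corollary~\ref{cor_final_lp}, i.e.\ through the truncations $u_k=(u-k)_+$ and the limit $k\downarrow0$.

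With the one-step gain in hand I would iterate. Set $\tau_j=\tfrac{jT}{4(n+1)}$ for $j=0,1,\dots,n$, so that $0=\tau_0<\tau_1<\cdots<\tau_n<T/4$ and every gap $\tau_j-\tau_{j-1}$, as well as $T/4-\tau_n$, equals $\tfrac{T}{4(n+1)}$; in particular $\tfrac{1}{\tau_j-\tau_{j-1}}+1\le C(n)(1/T+1)$. Applying the one-step gain at stage $j=1,\dots,n$ with $q=p+j-1>1$ and $(S_1,S_2,S_3)=(\tau_{j-1},\tau_j,T)$ gives $\int_{\tau_j}^T\IRd u^{p+j}\,dx\,dt\le C(p,j,n)(1/T+1)\int_{\tau_{j-1}}^T\IRd u^{p+j-1}\,dx\,dt$, and telescoping from $j=1$ to $j=n$ produces $\int_{\tau_n}^T\IRd u^{p+n}\,dx\,dt\le C(p,n)(1/T+1)^n\int_0^T\IRd u^p\,dx\,dt$. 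A final application of Corollary~\ref{cor_final_lp}, now with exponent $p+n$ on the triple $(\tau_n,T/4,T)$, converts this space-time integral into $\sup_{T/4\le t\le T}\IRd u^{p+n}\,dx$ at the cost of one further factor $1/T+1$, for the total power $n+1$, which is the assertion.

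I expect the argument to be essentially routine once Corollary~\ref{cor_final_lp} is available, so the points that deserve care are conceptual rather than computational. First, it is the $\varepsilon$-Poincar\'e inequality, not the weighted Sobolev inequality of Lemma~\ref{lem:Inequalities Sobolev weight aII}, that makes the scheme work: \eqref{eqn:epsilon_Poincare_inequality strongerII} lets one absorb $\int u^{q+1}$ into $\varepsilon\int a|\nabla u^{q/2}|^2$ plus a \emph{linear} remainder $C_\varepsilon\int u^q$, whereas a Sobolev-type bound would improve the exponent by a fixed multiplicative factor but make the estimate superlinear in $\int_0^T\IRd u^p\,dx\,dt$, destroying both the linearity and the sharp power $n+1$. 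Second, one has to choose the intermediate times $\tau_j$ and track the finitely many constants $C(p+j,\varepsilon)$ from Corollary~\ref{cor_final_lp} (together with the $\varepsilon$-Poincar\'e constant $C_\varepsilon$) carefully enough to see that the resulting constant $C(p,n)$ does not depend on $T$ and that the blow-up as $T\downarrow0$ is exactly $(1/T+1)^{n+1}$.
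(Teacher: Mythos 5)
Your proposal is correct and yields the asserted estimate, but it takes a genuinely different route from the paper. The paper obtains the one-step gain in integrability not from the $\varepsilon$-Poincar\'e inequality but from the unconditional non-local weighted Poincar\'e inequality of Gressman--Krieger--Strain \eqref{eq:GressmannKriegerStrain},
\[
\IRd u^{q+1}\,dx \le \Bigl(\tfrac{q+1}{q}\Bigr)^{2}\IRd a[u]\,|\nabla u^{q/2}|^{2}\,dx,
\]
which converts the gradient term controlled by Corollary~\ref{cor_final_lp} directly into $\int u^{q+1}$ with no $\varepsilon$ to fix and no remainder term to absorb. You instead apply \eqref{eqn:epsilon_Poincare_inequality strongerII} with $\phi=u^{q/2}$ and then absorb the extra $C_\varepsilon\int u^{q}$ term; this works and is perfectly consistent with the conditional framework of Theorem~\ref{thr.reg}, but it spends the $\varepsilon$-Poincar\'e hypothesis once more where the paper uses a free, unconditional inequality, and it produces a constant $C_\varepsilon$ that must be tracked. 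Both devices give the same arithmetic gain $q\mapsto q+1$ and keep the bound linear in $\int_0^T\int u^p$, which is the structural point you correctly identify. Your choice of equally spaced intermediate times $\tau_j=jT/(4(n+1))$ versus the paper's dyadic times $T_n=\tfrac{T}{4}(1-2^{1-n})$ is cosmetic and either produces the $(1/T+1)^{n+1}$ power. One remark on your commentary: the contrast you draw is with the weighted Sobolev inequality of Lemma~\ref{lem:Inequalities Sobolev weight aII}, which indeed would give a geometric rather than arithmetic gain and a superlinear bound; but that is not what the paper does here, so the more apt comparison is $\varepsilon$-Poincar\'e versus GKS, both of which are linear and arithmetic.
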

 
 \begin{proof}
The proof is based on iterating Corollary \ref{cor_final_lp} with a non-local weighted Poincare's inequality proven in \cite{GKS}: for each $p>0$  any smooth function $u\ge 0$ satisfies 
\begin{align}\label{eq:GressmannKriegerStrain}
  \int_{\mathbb{R}^d} u^{p+1}\;dx \leq \left (\frac{p+1}{p} \right )^2\int_{\mathbb{R}^d} a[u] |\nabla (u^{{p}/{2}})|^2 \;dx. 
\end{align}

Consider a sequence of times 
\begin{align*}
T_{n} = \frac{T}{4}\left ( 1- \frac{1}{2^{n-1}} \right ). 
  \end{align*}	

We start with Corollary \ref{cor_final_lp} which states that for each $p>1$
\begin{align*}
   \sup \limits_{T_2 \leq t\leq T} \left \{ \int  u^p(t)\;dx\right\} &+ \frac{(p-1)}{4p}  \int_{T_2}^{T}\int a[u]|\nabla ( u^{p/2})|^2\;dxdt\\
      \le  &\; \left(\frac{1}{T_2}+ C(p,\varepsilon)\right)\int_{0}^{T} \int u^p(t)\;dxdt. 	
    \end{align*}	
    Inequality (\ref{eq:GressmannKriegerStrain}) implies 
    \begin{align*}
  \frac{p(p-1)}{4(p+1)^2}  \int_{T_2}^{T}\int  u^{p+1}\;dxdt  \le  &\; \left(\frac{1}{T_2}+ C(p,\varepsilon)\right)\int_{0}^{T} \int u^p(t)\;dxdt. 
    \end{align*}
We now apply the energy inequality to $u^{p+1}$ 
\begin{align*}
   \sup \limits_{T_3 \leq t\leq T} \left \{ \int  u^{p+1}(t)\;dx\right\} &+ \frac{p}{4(p+1)}  \int_{T_3}^{T}\int a[u]|\nabla ( u^{(p+1)/2})|^2\;dxdt\\
      \le  &\; \left(\frac{1}{T_3-T_2}+ C(p,\varepsilon)\right)\int_{T_2}^{T} \int u^{p+1}(t)\;dxdt\\
      \le  &\;  \frac{4(p+1)^2}{p(p-1)} \left(\frac{1}{T_3-T_2}+ C(p,\varepsilon)\right) \left(\frac{1}{T_2}+ C(p,\varepsilon)\right)\int_{0}^{T} \int u^p(t)\;dxdt \\
      \le \:& 2^6\frac{(p+1)^2}{p(p-1)}\left(\frac{1}{T}+ C(p,\varepsilon)\right)^2\int_{0}^{T} \int u^p(t)\;dxdt.
    \end{align*}	
    Iterating the process we get 
    \begin{align*}
   \sup \limits_{T_{n+2} \leq t\leq T} \left \{ \int  u^{p+n}(t)\;dx\right\}   \le  2^{\sum_1^{n+2} k}C(p)^n\left(\frac{1}{T}+ 1\right)^{n+1}\int_{0}^{T} \int u^p(t)\;dxdt.
    \end{align*}	
Since $T_n\le T/4$ for any $n\ge 0$ we conclude 
\begin{align*}
   \sup \limits_{T/4 \leq t\leq T} \left \{ \int  u^{p+n}(t)\;dx\right\}   \le  2^{n(n+1)}C(p)^n\left(\frac{1}{T}+ 1\right)^{n+1}\int_{0}^{T} \int u^p(t)\;dxdt,
    \end{align*}	  
and the lemma is proven.

 \end{proof}
 
 \subsection{Global $L^pL^p$ estimates}
  \begin{lemma}\label{L1L3}
There exists a constant that only depends on $T$ and the initial data $u_0$ such that 
$$
\|u\|_{L^1(0,T;L^3(\mathbb{R}^3, \gamma^3dx))} \le C(T,u_0).
$$
 \end{lemma}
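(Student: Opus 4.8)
The plan is to reduce the claimed bound to the weighted gradient estimate \eqref{est.nau} via the Sobolev embedding $H^1(\R^3)\hookrightarrow L^6(\R^3)$. Set $\gamma(x)=(1+|x|)^{-1}$ and introduce the auxiliary function $w:=\sqrt{u}\,\gamma^{1/2}$, so that $w^6=u^3\gamma^3$; consequently, by the Gagliardo--Nirenberg--Sobolev inequality in $\R^3$,
\begin{align*}
\|u(t)\|_{L^3(\R^3,\gamma^3 dx)}=\left(\IRd u^3\gamma^3\,dx\right)^{1/3}=\|w(t)\|_{L^6(\R^3)}^2\le C\,\|\na w(t)\|_{L^2(\R^3)}^2 .
\end{align*}
It therefore suffices to bound $\int_0^T\|\na w(t)\|_{L^2(\R^3)}^2\,dt$.

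To this end I would expand $\na w=\gamma^{1/2}\na\sqrt{u}+\sqrt{u}\,\na(\gamma^{1/2})$ and use the elementary bound $|\na(\gamma^{1/2})|=\tfrac12(1+|x|)^{-3/2}\le\tfrac12$, together with $0\le\gamma\le1$, to obtain the pointwise inequality $|\na w|^2\le 2\gamma|\na\sqrt u|^2+\tfrac12 u$. Integrating in $x$ and invoking mass conservation $\IRd u(x,t)\,dx=\mass$ gives
\begin{align*}
\|\na w(t)\|_{L^2(\R^3)}^2\le 2\IRd\gamma(x)\,|\na\sqrt{u(x,t)}|^2\,dx+\tfrac12\mass .
\end{align*}

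The final step is to integrate this in time over $(0,T)$. The first term on the right is precisely (part of) the squared weighted $H^1$ norm of $\sqrt u$ appearing in \eqref{est.nau}, hence bounded by a constant $C_T$ depending only on $T$ and $u_0$ (this uses that $E\in L^\infty_{\mathrm{loc}}(0,\infty)$, established in \eqref{E.ub}, so that $\kappa(t)$ is uniformly positive and $H[u(t)]$ is uniformly bounded below on $[0,T]$), while the second term contributes $\tfrac{T}{2}\mass$. Combining with the first display,
\begin{align*}
\|u\|_{L^1(0,T;L^3(\R^3,\gamma^3 dx))}=\int_0^T\|w(t)\|_{L^6(\R^3)}^2\,dt\le C\int_0^T\|\na w(t)\|_{L^2(\R^3)}^2\,dt\le C(T,u_0),
\end{align*}
which is the assertion. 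There is no genuine obstacle here: the only delicate points are the chain rule for $\sqrt u$ and the membership $w(t)\in\dot H^1(\R^3)$ needed for the Sobolev step, and these are handled exactly as elsewhere — on the regularized solutions of \cite{GZ}, passing to the limit — so the lemma is in essence a direct corollary of \eqref{est.nau}.
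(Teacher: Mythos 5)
Your proof is correct and follows essentially the same path as the paper: you apply the Sobolev inequality to $w=\sqrt{u}\,(1+|x|)^{-1/2}$ (the paper's $g$), expand the gradient and bound $|\nabla(1+|x|)^{-1/2}|$, then integrate in time and close the estimate with mass conservation together with \eqref{est.nau}. The paper's version is identical in structure, merely less explicit about the constants in the pointwise gradient bound.
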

 
 \begin{proof}
 We start with the classical Sobolev inequality in three dimensions: 
 $$
 \left(\int_{\mathbb{R}^3} g^6 \;dx \right)^\frac{1}{3} \le C \int_{\mathbb{R}^3} |\nabla g|^2 \;dx,
 $$
and apply it to 
 $
 g = \frac{\sqrt{u}}{(1+|x|)^{1/2}}.
 $
 Since 
 $$
 |\nabla g | \le \frac{ |\nabla \sqrt{u} |}{(1+|x|)^{1/2}} +  \sqrt{u},
 $$
 Sobolev inequality yields
  $$
 \left(\int_{\mathbb{R}^3} \frac{u^3}{(1+|x|)^{3}}  \;dx \right)^{\frac{1}{3}} \le C \int_{\mathbb{R}^3}\frac{ |\nabla \sqrt{u} |^2}{(1+|x|)} +  u \;dx.
 $$
 Integrating both sides in the time interval $(0,T)$ we get 
 \begin{align}\label{est_L1L3}
\int_0^T \left(\int_{\mathbb{R}^3} \frac{u^3}{(1+|x|)^{3}}  \;dx \right)^{\frac{1}{3}}dt & \le C\int_0^T \int_{\mathbb{R}^3}\frac{ |\nabla \sqrt{u} |^2}{(1+|x|)} \;dxdt +   \int_0^T \int_{\mathbb{R}^3}u \;dxdt  \nonumber \\
& \le C(T,u_0)
 \end{align}
 using mass conservation and estimate (\ref{est.nau}). 
\end{proof}

 \begin{lemma}\label{LpLp}
There exists a constant that only depends on $T$ and the initial data $u_0$ such that 
$$
\|u\|_{L^{5/3}(0,T;L^{5/3}(\mathbb{R}^3))} \le C(T,u_0).
$$

 \end{lemma}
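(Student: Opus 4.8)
The plan is to obtain the bound $\|u\|_{L^{5/3}(0,T;L^{5/3}(\R^3))}\le C(T,u_0)$ by interpolating between the weighted $L^1L^3$ estimate of Lemma~\ref{L1L3} and the mass/entropy--type control on $u$ that is already available, namely $u\in L^\infty(0,T;L^1(\R^3))$ together with the weighted gradient bound \eqref{est.nau}. The key observation is that the exponent $5/3$ in both time and space is exactly the self-improving exponent one gets from a parabolic interpolation of the form $L^\infty_t L^1_x \cap L^1_t L^3_x \hookrightarrow L^{5/3}_{t,x}$ in three spatial dimensions (this is the same scaling that appears in parabolic regularity theory for the heat equation, where $L^\infty_t L^1 \cap L^1_t L^{3}$ of $u$ gives $u \in L^{1+2/3}_{t,x}$).

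First I would remove the spatial weight. Lemma~\ref{L1L3} controls $u/(1+|x|)$ in $L^1(0,T;L^3)$, not $u$ itself, so one must pay for the weight $\gamma(x)^{-1}=(1+|x|)$ using a moment bound. Here I would use the second-moment bound \eqref{E.ub}, i.e. $E\in L^\infty_{loc}(0,\infty)$, which gives $\int (1+|x|)^2 u\,dx \le C_T$ uniformly on $[0,T]$, so that on large balls $u$ has little mass far from the origin. Concretely I would split $\R^3 = B_\rho \cup (\R^3\setminus B_\rho)$: on $B_\rho$ the weight costs only a factor $(1+\rho)$, while on the exterior one uses Chebyshev in the moment to make the contribution small, and then optimize in $\rho$. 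An alternative, cleaner route is to interpolate directly in the weighted spaces: write $u = (u\gamma^3)^{\theta}\cdot(\text{something involving }u\text{ and a moment weight})^{1-\theta}$ with Hölder exponents chosen so the moment weights $(1+|x|)$ are absorbed by $\sup_t\int(1+|x|^2)u\,dx$, and the $L^1_tL^3_x$ piece by Lemma~\ref{L1L3}.

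The core interpolation step is: for fixed $t$, Hölder with exponents chosen so that
\begin{align*}
\int_{\R^3} u^{5/3}\,dx \le \left(\int_{\R^3} u^3\,dx\right)^{\lambda}\left(\int_{\R^3} u\,dx\right)^{1-\lambda},
\end{align*}
which forces $5/3 = 3\lambda + (1-\lambda)$, i.e. $\lambda = 1/3$; hence $\int u^{5/3}\,dx \le \|u(t)\|_{L^3}\,\|u(t)\|_{L^1}^{2/3} \le C_T\,\|u(t)\|_{L^3}$ using mass conservation. Then integrating in time, $\int_0^T \int u^{5/3}\,dx\,dt \le C_T\int_0^T \|u(t)\|_{L^3}\,dt = C_T\|u\|_{L^1(0,T;L^3)}$, and the right-hand side is finite by (the unweighted version of) Lemma~\ref{L1L3}. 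So the whole argument reduces to upgrading Lemma~\ref{L1L3} from the weighted to the unweighted $L^1(0,T;L^3)$ norm, or equivalently to absorbing the weight $(1+|x|)^3$ at the cost of moments; either way one combines \eqref{E.ub} and \eqref{est.nau}.

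The main obstacle I expect is precisely this weight-removal: Lemma~\ref{L1L3} only gives $u/(1+|x|)\in L^1_tL^3_x$, and the naive bound $\|u\|_{L^3} \le (1+\rho)\|u/(1+|x|)\|_{L^3(B_\rho)} + \|u\|_{L^3(B_\rho^c)}$ leaves an exterior term that is not obviously controlled in $L^3$ by moments alone (moments only control $L^1$-type tails). The honest fix is to interpolate the exterior $L^3$ norm against the exterior $L^1$ tail (small, by the second moment) and the exterior $L^6$ norm (controlled by \eqref{est.nau} via Sobolev), so that the large-$|x|$ contribution is genuinely small; then optimizing the splitting radius $\rho$ in terms of $T$ and the moment bound yields the uniform constant $C(T,u_0)$. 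This is the one place where one must be careful not to lose a power of $\rho$ that cannot be reabsorbed, but the parabolic scaling $L^\infty_tL^1\cap L^1_tL^6$ (equivalently $L^1_tL^3$ after interpolation) is consistent, so the estimate closes.
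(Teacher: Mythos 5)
Your diagnosis of the obstacle is correct: Lemma~\ref{L1L3} controls only the weighted quantity $\bigl(\int u^3\gamma^3\,dx\bigr)^{1/3}$, so the straightforward interpolation $\int u^{5/3}\le\|u\|_{L^3}\|u\|_{L^1}^{2/3}$ cannot be applied as stated. But your primary fix --- ball splitting, with the exterior $L^3$ norm controlled by interpolating between $L^1(B_\rho^c)$ (small via the second moment) and $L^6(B_\rho^c)$ (claimed to come from \eqref{est.nau} via Sobolev) --- has a genuine gap at the $L^6$ step. Estimate \eqref{est.nau} gives $\nabla\sqrt{u}\in L^2_t L^2(\gamma\,dx)$ with $\gamma(x)=(1+|x|)^{-1}$; since $\gamma$ is small precisely on $B_\rho^c$, this does \emph{not} control $\int_{B_\rho^c}|\nabla\sqrt{u}|^2\,dx$, so Sobolev does not put $\sqrt{u}$ in unweighted $L^6$ on the exterior. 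The weighted gradient bound only yields Sobolev control of the weighted function $\sqrt{u}/(1+|x|)^{1/2}$, which is exactly how Lemma~\ref{L1L3} itself is proved --- i.e.\ exactly the weighted $L^3$ estimate you were trying to upgrade. The ball-splitting route therefore does not close as written.

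The ``alternative, cleaner route'' you mention only in passing is in fact the paper's proof, and it sidesteps the weight-removal problem entirely. One factors $u^{5/3}=\bigl(u^{2/3}(1+|x|)\bigr)\cdot\bigl(u\gamma\bigr)$ and applies H\"older with exponents $(3/2,3)$:
\begin{equation*}
\int u^{5/3}\,dx\le\Bigl(\int u(1+|x|)^{3/2}\,dx\Bigr)^{2/3}\Bigl(\int u^3\gamma^3\,dx\Bigr)^{1/3}.
\end{equation*}
The first factor is bounded uniformly on $[0,T]$ by mass conservation and the second-moment bound \eqref{E.ub}; the second factor, after integrating over $(0,T)$, is precisely the quantity controlled by Lemma~\ref{L1L3}. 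You should promote that aside to the main argument and drop the ball splitting.
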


 \begin{proof}
Interpolation yields
\begin{align*}
\int_{\mathbb{R}^3}u^p \;dx &= \int_{\mathbb{R}^3}u^{p\theta} u^{p(1-\theta)}(1+|x|)^m(1+|x|)^{-m}  \;dx \\
&\le  \left(\int_{\mathbb{R}^3}u^{pp_1\theta} (1+|x|)^{p_1m}\;dx\right)^{\frac{1}{p_1}}  \left(\int_{\mathbb{R}^3} u^{p(1-\theta)p_2}(1+|x|)^{-mp_2}  \;dx \right)^{\frac{1}{p_2}},
 \end{align*}
 with $\frac{1}{p_1} + \frac{1}{p_2} =1$ and $\theta <1$. For $m=1$, $p_1 = 3/2$, $p_2 = 3$, $p=5/3$ and $\theta = 2/5$ we get 
  \begin{align*}
\int_{\mathbb{R}^3}u^p \;dx \le  \left(\int_{\mathbb{R}^3}u (1+|x|)^{3/2}\;dx\right)^{\frac{3}{5}} \left(\int_{\mathbb{R}^3} u^{3}(1+|x|)^{-3}  \;dx \right)^{\frac{1}{3}}\\
\le \left(\int_{\mathbb{R}^3}u (1+|x|)^{2}\;dx\right)^{\frac{3}{5}} \left(\int_{\mathbb{R}^3} u^{3}(1+|x|)^{-3}  \;dx \right)^{\frac{1}{3}}.
  \end{align*}
  Integrating in the time interval $(0,T)$ we get 
  \begin{align*}
\int_0^T\int_{\mathbb{R}^3}u^p \;dxdt &\le \int_0^T\left(\int_{\mathbb{R}^3}u (1+|x|)^{2}\;dx\right)^{\frac{3}{5}} \left(\int_{\mathbb{R}^3} u^{3}(1+|x|)^{-3}  \;dx \right)^{\frac{1}{3}} \;dt \\
&\le C(T,u_0) \int_0^T  \left(\int_{\mathbb{R}^3} u^{3}(1+|x|)^{-3}  \;dx \right)^{\frac{1}{3}} \;dt  \le C(T,u_0),
\end{align*}
  using conservation of mass and bound of the second momentum for the second inequality and (\ref{est_L1L3}) in the last inequality. 
 
 \end{proof}

 \subsection{Gain in integrability} 
The aim of this section is to show that $f$ has enough integrability for $a[u]$ to be uniformly bounded in space and time. A consequence of interpolation and H\"older's inequality is that $a[u](x,t)$, defined as 
$$
a[u](x,t) := \frac{1}{4\pi}\int_{\mathbb{R}^3} \frac{u(y)}{|x-y|}\;dy,
$$
is uniformly bounded in space and time if $u$ belongs to $L^\infty(L^p(\mathbb{\R}^3))$ with $p>\frac{3}{2}$. This is what we will show next, combining inequality from Lemma \ref{gain_n} with the $L^{5/3}L^{5/3}$ estimate from Lemma \ref{LpLp}.

\begin{lemma}\label{a_inf}
For any $0<t<T$ and any interger $n$ there exists a constant $C(p,T,u_0,n)$ such that for $\alpha = \frac{(n+1)}{(3n+2)}$:
\begin{align*}
\|a[u]\|_{L^\infty(t,T, \mathbb{R}{^3})}\le C(T,u_0,n)\left(\frac{1}{t}+ 1\right)^{{\alpha}}.
\end{align*}
\end{lemma}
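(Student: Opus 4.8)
The plan is to feed the gain-of-integrability estimate of Lemma~\ref{gain_n}, seeded by the global space-time bound of Lemma~\ref{LpLp}, into a Newtonian-potential estimate for $a[u]$. First I would apply Lemma~\ref{gain_n} with seed exponent $p=5/3$, for which Lemma~\ref{LpLp} provides $\int_0^T\int_{\R^3}u^{5/3}\,dx\,dt\le C(T,u_0)$ --- exactly the quantity appearing on the right-hand side of Lemma~\ref{gain_n}. To turn the $\sup$ over $[T/4,T]$ there into a bound at a general time $s\in(0,T)$ carrying the correct weight in $1/s$, I would run Lemma~\ref{gain_n} on a subinterval of length comparable to $s$: use $(0,T)$ when $s\ge T/4$ and $(0,4s)$ when $s<T/4$, in both cases bounding $\int_0^{\tilde T}\int u^{5/3}$ by $C(T,u_0)$ since $\tilde T\le T$, and noting that $s$ lies in the window on which the supremum is taken. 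Discarding the nonnegative gradient term, this gives
\begin{align*}
\int_{\R^3}u^{5/3+n}(x,s)\,dx\le C(T,u_0,n)\left(\frac1s+1\right)^{n+1},\qquad s\in(0,T).
\end{align*}

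Next I would estimate $a[u]$ pointwise by splitting the Newtonian integral at a free radius $\rho>0$:
\begin{align*}
4\pi\,a[u](x,s)=\int_{|x-y|<\rho}\frac{u(y,s)}{|x-y|}\,dy+\int_{|x-y|\ge\rho}\frac{u(y,s)}{|x-y|}\,dy\le\frac{\|u(s)\|_{L^1}}{\rho}+C(n)\,\rho^{\,2-3/q}\,\|u(s)\|_{L^q},
\end{align*}
where $q=\tfrac53+n$, the first term uses $|x-y|\ge\rho$, and the second uses H\"older with conjugate exponent $q'=q/(q-1)$, which is $<3$ for every $n\ge 0$, so $|z|^{-q'}$ is locally integrable. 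Optimizing in $\rho$ produces the interpolation bound
\begin{align*}
a[u](x,s)\le C(n)\,\|u(s)\|_{L^1}^{1-\beta}\,\|u(s)\|_{L^q}^{\beta},\qquad \beta=\frac{q}{3(q-1)}=\frac{5+3n}{3(3n+2)} .
\end{align*}
Since $\beta/q=1/\bigl(3(q-1)\bigr)=1/(3n+2)$, one has $\|u(s)\|_{L^q}^{\beta}=\bigl(\int_{\R^3}u^{5/3+n}(x,s)\,dx\bigr)^{1/(3n+2)}$. Plugging in the bound from the first step, using mass conservation $\|u(s)\|_{L^1}=\mass$, and observing that the resulting estimate $C(T,u_0,n)(1/s+1)^{(n+1)/(3n+2)}$ is nonincreasing in $s$, I would take the supremum over $s\in(t,T)$ and $x\in\R^3$ to conclude with $\alpha=(n+1)/(3n+2)$. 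Letting $n\to\infty$ gives $\alpha\downarrow\tfrac13$, which is precisely how this lemma yields the estimate for $a[u]$ in Theorem~\ref{thr.reg}.

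The argument is essentially exponent bookkeeping once the two inputs are in place, so there is no single hard step; the points that actually matter are (i) choosing the seed $p=5/3$ so that Lemma~\ref{LpLp} applies, and (ii) genuinely optimizing in $\rho$ in the potential splitting --- the cheap choice $\rho=1$ would only give the weaker power $3(n+1)/(3n+5)$ and would not reach $\tfrac13$ in the limit. The one technical nuisance I expect is the time-rescaling in the first step, needed to upgrade the $\sup$ over $[T/4,T]$ of Lemma~\ref{gain_n} into a bound at every $s\in(0,T)$ with a true $1/s$ weight.
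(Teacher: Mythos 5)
Your proposal is correct and follows essentially the same route as the paper: split the Newtonian potential at a free radius, optimize that radius to obtain the interpolation bound $a[u]\le C\|u\|_{L^1}^{1-\beta}\|u\|_{L^q}^{\beta}$ with $\beta=q/(3(q-1))$, and then feed in the $L^\infty L^{5/3+n}$ bound from Lemma~\ref{gain_n} seeded by the $L^{5/3}L^{5/3}$ estimate of Lemma~\ref{LpLp}, giving the exponent $(n+1)/(3n+2)$. You are slightly more careful than the paper on one point: the paper's proof only produces the supremum over $(T/4,T)$ with a constant depending on $1/T$, and your step of rerunning Lemma~\ref{gain_n} on $(0,4s)$ (or $(0,T)$ when $s\ge T/4$) is what actually upgrades this to the stated bound with weight $(1/t+1)^\alpha$ for every $t\in(0,T)$.
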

\begin{proof}
Let $r>0$; for $p>3/2$ we have 
\begin{align*}
4\pi a[u](x,t) &= \int_{B_r(x)} \frac{u(y)}{|x-y|}\;dy + \int_{B_r^c(x)} \frac{u(y)}{|x-y|}\;dy \\
&\le \frac{1}{r} \|u\|_{L^\infty(L^1)} + 4\pi \|u\|_{L^\infty(L^p)}r^{2-3/p},
\end{align*}
applying H\"older inequality. The minimum of the function $
F(r) = \frac{c_1}{r} + c_2  r^{2-3/p}
$
is reached at the point 
$$
r_{min} = \left( \frac{c_1}{\left(2-3/p\right)c_2}\right)^{p/(3(p-1))}
$$ 
and this implies 
\begin{align*}
a[u](x,t) \le 4 \|u\|_{L^\infty(L^1)} ^{\frac{2p-3}{3(p-1)}} \|u\|_{L^\infty(L^p)}^{\frac{p}{3(p-1)}}.
\end{align*}

From Lemma \ref{gain_n} we know that 
\begin{align*}
   \sup \limits_{T/4 \leq t\leq T} \left \{ \int  u^{p+n}(t)\;dx\right\}   \le  2^{n(n+1)}C(p)^n\left(\frac{1}{T}+ 1\right)^{n+1}\int_{0}^{T} \int u^p(t)\;dxdt,
    \end{align*}	  
and taking $p=5/3$ and using Lemma \ref{LpLp} we get 
\begin{align}\label{u_inf_n}
 \| u\|_{L^\infty( T/4, T,L^{5/3+n}(\mathbb{R}^3))}   \le  C(n,T,u_0) \left(\frac{1}{T}+ 1\right)^{\frac{n+1}{5/3+n}}.
    \end{align}	
Going back to $a[u]$ this last estimate implies 
\begin{align}\label{a_inf_n}
  \sup \limits_{t\in (T/4,T),x\in \mathbb{R}^3} a[u](x,t) \le& c(u_0) \|u\|_{L^\infty(T/4,T; L^{5/3+n})}^{\frac{5/3+n}{2+3n}}\nonumber \\
  \le& C(n,T,u_0) \left(\frac{1}{T}+ 1\right)^{\frac{n+1}{3n+2}}.
\end{align}

\end{proof}

\subsection {De-Giorgi iteration and $L^\infty$-regularization}

\begin{prop}\label{prop_iter}
Let $p=\frac{5}{3}$ and $q$ as in Lemma \ref{lem:Inequalities Sobolev weight aII}. We have 
\begin{align*}
   \sup \limits_{T_{n+1} \leq t\leq T} \left \{ \int (\eta_n u_n^{p/2})^2\;dx\right\} &+ \frac{(p-1)}{4p}  \int_{T_{n+1}}^{T}\int a|\nabla (\eta_n u_n^{p/2})|^2\;dxdt\\
      \le  & C_0 \int_{T_n}^{T}\int   a  (\eta_{n-1}u_{n-1}^{p/2})^q\;dxdt,
 \end{align*}
 with 
 $$
 C_0:=C^{n-1} C(R,p)\left(\frac{1}{T} + 1\right)\left(\frac{1}{M}\right)^{\frac{p(q-2)}{2}-1} .
 $$
\end{prop}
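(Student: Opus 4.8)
The plan is to derive the stated De Giorgi iteration inequality by combining the energy estimate from Corollary \ref{cor_iterat} with the weighted Sobolev inequality of Lemma \ref{lem:Inequalities Sobolev weight aII}, keeping careful track of the truncation level $k$. I would set up the standard De Giorgi machinery: choose a decreasing sequence of levels $k_n = M(1 - 2^{-n})$ converging to $M$, put $u_n := (u - k_n)_+$, and pick a decreasing sequence of cut-off functions $\eta_n$ equal to $1$ on a shrinking sequence of space-time cylinders and supported in the previous one, with $|\nabla\eta_n| \lesssim 2^n/R$ and $|\Delta\eta_n| \lesssim 4^n/R^2$. The times $T_n$ are the ones already introduced, so that $T_n < T_{n+1}$ increase up to (a fraction of) $T$ and $T_{n+1}-T_n \sim T 2^{-n}$.

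First I would apply Corollary \ref{cor_iterat} at step $n$, i.e. with $k = k_n$, $\eta = \eta_n$, and times $T_n < T_{n+1} < T$, obtaining control of $\sup_t \int (\eta_n u_n^{p/2})^2\,dx$ and of $\int\!\int a|\nabla(\eta_n u_n^{p/2})|^2$ by the four right-hand side terms: the $(T_{n+1}-T_n)^{-1}$ term, the $k_n^2 \int\!\int \eta_n^2 u_n^{p-1}$ term, the $\int\!\int u_n^p(a\nabla\eta_n,\nabla\eta_n)$ term, and the $\int\!\int a u_n^p \eta_n |\Delta\eta_n|$ term. The second step is to bound each of these by the single quantity $\int_{T_n}^T\!\int a (\eta_{n-1}u_{n-1}^{p/2})^q\,dxdt$ appearing on the right of the Proposition. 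The mechanism is the usual one: on the support of $\eta_n$ one has $u_{n-1} \geq k_n - k_{n-1} = M 2^{-n}$, hence $\chi_{\{u_n>0\}} \leq (u_{n-1}/(M2^{-n}))^{\beta}$ for any $\beta>0$, and moreover $u_n \leq u_{n-1}$ pointwise while $\eta_n \leq \eta_{n-1}$. Thus every integrand of the form $(\text{power of }u_n)\times(\text{weight})$ can be converted, at the cost of a factor $C^n M^{-\beta}$, into $a (\eta_{n-1} u_{n-1}^{p/2})^q$ for the value of $q$ allowed in Lemma \ref{lem:Inequalities Sobolev weight aII} — using $a \gtrsim (1+|x|)^{-1}$ on the bounded spatial region (the uniform lower bound \eqref{a.lb}) to absorb the gradient-of-cutoff weights, and the already-established $L^p L^p$ bound (Lemma \ref{LpLp}) together with Lemma \ref{gain_n} to absorb the lower-order $k_n^2 \int\!\int \eta_n^2 u_n^{p-1}$ term, noting $u_n^{p-1} = u_n^{p/2}\,u_n^{p/2-1} \leq u_n^{p/2}(M2^{-n})^{p/2-1}$ so the $k_n^2$ is compensated.

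The last step is bookkeeping of constants: collecting the factors $2^n$ from $|\nabla\eta_n|$, $|\Delta\eta_n|$, $(T_{n+1}-T_n)^{-1}$, and from the level gaps $M2^{-n}$, one gets a prefactor of the form $C^{n-1} C(R,p)(1/T+1)$ times $M^{-(p(q-2)/2 - 1)}$, which is exactly $C_0$ as stated; the exponent $p(q-2)/2 - 1$ on $1/M$ is precisely what is needed so that, after applying the weighted Sobolev inequality to pass from the left-hand side at level $n$ to the right-hand side shape at level $n+1$, the resulting recursion $A_{n+1} \leq C_0 A_n^{q/2}$ has a geometric gain (this is where the condition $q>2$, guaranteed by $q \in (1, 2(1+2/3))$ together with $p=5/3$, and the fact that $p(q-2)/2 - 1$ can be made positive, will be used later in the actual iteration).

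The main obstacle I anticipate is the spatial localization: the energy estimates and the non-local structure of $a[u]$ interact through the cut-offs $\eta_n$, and one must be careful that the terms $\int\!\int u_n^p(a\nabla\eta_n,\nabla\eta_n)$ and $\int\!\int a u_n^p\eta_n|\Delta\eta_n|$ — which carry the factors $4^n/R^2$ — are genuinely controlled by $a(\eta_{n-1}u_{n-1}^{p/2})^q$ with the claimed power of $M$, rather than by a worse power; this forces the precise choice of how much of the $\chi_{\{u_n>0\}}$-to-$u_{n-1}$ conversion is spent on each term, and is the reason the exponent on $1/M$ in $C_0$ takes the specific value $\tfrac{p(q-2)}{2}-1$.
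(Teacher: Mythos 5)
Your general strategy is the right one, and you correctly identify the key mechanism: on $\{u_n>0\}$ one has $u_{n-1}\geq k_n - k_{n-1}=M2^{-n}$, so indicators of level sets can be converted into powers of $u_{n-1}$ at the cost of factors $(2^n/M)^{\beta}$, and the lower bound \eqref{a.lb} lets you insert a factor of $a$ on the bounded region $B_R$. However, the way you handle the $k_n^2\int\!\int\eta_n^2 u_n^{p-1}$ term is wrong. You write $u_n^{p-1}=u_n^{p/2}u_n^{p/2-1}\leq u_n^{p/2}(M2^{-n})^{p/2-1}$; with $p=5/3$ the exponent $p/2-1=-1/6$ is negative, so this inequality would require $u_n\geq M2^{-n}$, which fails: on the support of $u_n$ only $u_{n-1}\geq M2^{-n}$ holds, while $u_n$ itself can be arbitrarily small. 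Replacing $u_n$ by $u_{n-1}$ in the negative power would fix the sign, but then you are left with $u_{n-1}^{p/2}$, not $(\eta_{n-1}u_{n-1}^{p/2})^q$ — the power $q>2$ that appears on the right of the Proposition is not produced by a pointwise estimate at all. The paper instead bounds $u_n^{p-1}\chi_{\{u_n>0\}}\leq u_{n-1}^{p-1}\chi_{\{u_{n-1}\geq M/2^n\}}$, applies H\"older with exponents $\frac{pq}{2(p-1)}$ and its conjugate, and then Chebyshev to the indicator; this H\"older step is where the exponent $q$ actually enters and where the precise power $\frac{p(q-2)}{2}-1$ of $1/M$ is produced.

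Two further misattributions: Lemma~\ref{LpLp} and Lemma~\ref{gain_n} are not invoked in the proof of this Proposition — they are used only afterwards to estimate the initial iterate $U_0$ in the final $L^\infty$ result. Likewise, the weighted Sobolev inequality of Lemma~\ref{lem:Inequalities Sobolev weight aII} does not appear in the proof of Proposition~\ref{prop_iter}; it is applied in the subsequent Proposition to recast the left-hand side here as $\bigl(\int\!\int a(\eta_n u_n^{p/2})^q\bigr)^{2/q}$ and thereby close the recursion $U_n\leq C_0 U_{n-1}^{q/2}$. Proposition~\ref{prop_iter} itself is a pure energy estimate (Corollary~\ref{cor_iterat}) followed by H\"older and Chebyshev.
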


\begin{proof}
Consider the sequence of times and radii
  \begin{align*}
    T_n & = \frac{1}{4}\left ( 2- \frac{1}{2^n} \right )T,\quad R_n  = \frac{1}{2}\left(1 + \frac{1}{2^n}\right)R,
  \end{align*}
  and, for every $n \ge 1$, let $B_n$ denote the ball $ B_n := B_{R_n}(0)$. 
 
  Let $\eta_n$ be a $C^\infty$ function supported in $B_n$, with $0\leq \eta_n\leq 1$ everywhere,  $\eta_n=1$ in $B_{n+1}$,  $\|\nabla \eta_n\|_\infty \leq C \eta_n2^{n+1}$ and $\|D^2(\eta_n)\|_\infty \leq  C 2^{2n+2}$. Corollary \ref{cor_iterat} says that for $k_n:=M\left(1-\frac{1}{2^n}\right)$, $T_1 = T_n$, $T_2 = T_{n+1}$, $T_3 = T$,  $T_{n+1}-T_{n} = \frac{T}{2^{n+1}}$ and  
$$
u_n:=\left(u-M\left(1-\frac{1}{2^n}\right)\right)_+
$$
we have 
\begin{align*}
   \sup \limits_{T_{n+1} \leq t\leq T} \left \{ \int \eta_n^2 u_n^p(t)\;dx\right\} &+ \frac{(p-1)}{4p}  \int_{T_{n+1}}^{T}\int a|\nabla (\eta_n u_n^{p/2})|^2\;dxdt\\
      \le  &\; \left(\frac{2^{n+2}}{T}+C(\varepsilon, p)\right) \int_{T_n}^{T} \int \eta_n^2u_n^p\;dxdt \\
      &+C(p) \int_{T_n}^{T}\int  u_n^p (a\nabla \eta_n,\nabla \eta_n) \;dxdt+2 p k_n^2 \int_{T_n}^{T}\int \eta_n^2 u_n^{p-1}\;dxdt\\
     & + \int_{T_n}^{T}\int a u_n^p \eta_n |\Delta \eta_n|\;dxdt \le  \; U_n,
    \end{align*}	
with
\begin{align*}
U_n:= &\left(\frac{2^{n+2}}{T}+C(\varepsilon, p)\right) \int_{T_n}^{T} \int \eta_n^2u_n^p\;dxdt \\
      &\;+(C(p)+1)2^{2n+2} \int_{T_n}^{T}\int_{B_n}  a  \eta_n^2 u_n^p  \;dxdt +2 p k_n^2 \int_{T_n}^{T}\int \eta_n^2 u_n^{p-1}\;dxdt.   	\end{align*}
We start by estimating the last term of $U_n$: since $\eta_{n-1}=1$ on $B_n$ and $\chi_{\{u_{n}\ge 0\}} = \chi_{\{u_{n-1}\ge \frac{M}{2^{n}}\}}$ we have 
\begin{align*}
2 p k_n^2 \int_{T_n}^{T}\int \eta_n^2 u_n^{p-1}\;dxdt \le &\; 2 p M^2 \int_{T_n}^{T}\int_{B_n}  u_n^{p-1}\;dxdt \\
=  &\; 2 p M^2 \int_{T_n}^{T}\int_{B_n}  u_n^{p-1}\chi_{\{u_{n-1}\ge \frac{M}{2^{n}}\}}\;dxdt\\
\le & \;2 p M^2 \int_{T_n}^{T}\int_{B_n}  u_{n-1}^{p-1}\chi_{\{\eta_{n-1}^{2/p}u_{n-1}\ge \frac{M}{2^{n}}\}}\;dxdt.
\end{align*}
H\"older inequality yields 
\begin{align*}
2 p k_n^2 \int_{T_n}^{T}\int \eta_n^2 u_n^{p-1}\;dxdt \le &  \;2 p M^2 \int_{T_n}^{T}\left( \int_{B_n} u_{n-1}^{\frac{pq}{2}}\;dx\right)^{\frac{2(p-1)}{pq}} \cdot\\
&\quad\quad \quad \quad \quad  \;\cdot \left( \int_{B_n} \chi_{\{\eta_{n-1}^{2/p}u_{n-1}\ge \frac{M}{2^{n}}\}}\;dx\right)^{\frac{pq-2(p-1)}{pq}}\;dt.
\end{align*}
Using Chebyshev's inequality 
$$
\int_{B_n} \chi_{\{\eta_{n-1}^{2/p}u_{n-1}\ge \frac{M}{2^{n}}\}}\;dx \le \left(\frac{2^{n}}{M}\right)^{pq/2} \int (\eta_{n-1}^{2/p}u_{n-1})^{pq/2}\;dx 
$$
 we get 
\begin{align*}
2 p k_n^2 \int_{T_n}^{T}\int \eta_n^2 u_n^{p-1}\;dxdt \le   &\;2 p M^2\left(\frac{2^{n}}{M}\right)^{\frac{pq-2(p-1)}{2}}  \int_{T_n}^{T}\left( \int_{B_n} u_{n-1}^{\frac{pq}{2}}\;dx\right)^{\frac{2(p-1)}{pq}} \left(\int (\eta_{n-1}u_{n-1}^{p/2})^q\;dx\right)^{\frac{pq-2(p-1)}{pq}}\;dt \\
=  &\;2 p M^2\left(\frac{2^{n}}{M}\right)^{\frac{pq-2(p-1)}{2}}  \int_{T_n}^{T}\left( \int_{B_n} \eta_{n-1}^qu_{n-1}^{\frac{pq}{2}}\;dx\right)^{\frac{2(p-1)}{pq}} \cdot \\
&  \quad \quad \quad \quad \quad \quad \quad \quad \quad  \quad \quad \quad\cdot \left(\int (\eta_{n-1}u_{n-1}^{p/2})^q\;dx\right)^{\frac{pq-2(p-1)}{pq}}\;dt\\
=  &\;2 p M^2\left(\frac{2^{n}}{M}\right)^{\frac{pq-2(p-1)}{2}}  \int_{T_n}^{T}\int(\eta_{n-1}u_{n-1}^{p/2})^q\;dxdt\\
&{{ \le  \;2 p C(R) M^2\left(\frac{2^{n}}{M}\right)^{\frac{pq-2(p-1)}{2}}  \int_{T_n}^{T}\int a(\eta_{n-1}u_{n-1}^{p/2})^q\;dxdt.}}
\end{align*}
We now estimate the first two terms of $U_n$: 
\begin{align*}
&\left(\frac{2^{n+2}}{T}+C(\varepsilon, p)\right) \int_{T_n}^{T} \int \eta_n^2u_n^p\;dxdt +(C(p)+1)2^{2n+2} \int_{T_n}^{T}\int_{B_n}  a  \eta_n^2 u_n^p  \;dxdt \\
&\le 2^{2n+2} \left(\frac{1}{T} + {{C(p, R)}}\right) \int_{T_n}^{T}\int_{B_n}  a  \eta_n^2 u_n^p  \;dxdt \\
&\le 2^{2n+2} \left(\frac{1}{T} + {{C(p, R)}}\right) \int_{T_n}^{T}\int_{B_n}  a  u_{n-1}^p\chi_{\{u_n\ge 0\}}  \;dxdt \\
&\le 2^{2n+2} \left(\frac{1}{T} + {{C(p, R)}}\right) \int_{T_n}^{T}\int  a  \eta^{2}_{n-1}u_{n-1}^p\chi_{\{u_{n-1}\ge \frac{M}{2^{n}}\}}  \;dxdt.
      \end{align*}
Similarly as before, we apply H\"older's  and Chebyshev's inequalities and obtain 
\begin{align*}
\int a   \eta^{2}_{n-1} u_{n-1}^p\chi_{\{u_{n-1}\ge \frac{M}{2^{n}}\}}  \;dx & \le \left( \int  a  \eta^{q}_{n-1} u_{n-1}^{pq/2}\;dx \right)^{2/q}\left( \int a\chi_{\{\eta_{n-1}^{2/p}u_{n-1}\ge \frac{M}{2^{n+1}}\}}  \;dx\right)^{(q-2)/q} \\
& \le \left( \int  a  (\eta_{n-1}  u_{n-1}^{p/2})^{q}\;dx \right)^{2/q}\left( \left(\frac{2^{n}}{M}\right)^{pq/2}\int a \eta_{n-1}^{q} u_{n-1}^{pq/2} \;dx \right)^{(q-2)/q} \\
& = \left(\frac{2^{n}}{M}\right)^{p(q-2)/2} \int   a  (\eta_{n-1}u_{n-1}^{p/2})^q\;dx ,
\end{align*}
which implies 
\begin{align*}
&\left(\frac{2^{n+2}}{T}+C(\varepsilon, p)\right) \int_{T_n}^{T} \int \eta_n^2u_n^p\;dxdt +(C(p)+1)2^{2n+2} \int_{T_n}^{T}\int  a  \eta_n^2 u_n^p  \;dxdt \\
 & \le \;2^{2n+2} \left(\frac{1}{T} + {{C(p, R)}}\right) \left(\frac{2^{n+1}}{M}\right)^{p(q-2)/2} \int_{T_n}^{T}\int   a  (\eta_{n-1}u_{n-1}^{p/2})^q\;dx. 
 \end{align*}
 Summarizing we obtain: 
 \begin{align*}
 U_n \le& \left(2 p C(R) M^2\left(\frac{2^{n+1}}{M}\right)^{\frac{pq-2(p-1)}{2}} + 2^{2n+2} \left(\frac{1}{T} + {{C(p, R)}}\right) \left(\frac{2^{n}}{M}\right)^{\frac{p(q-2)}{2}}\right) \int_{T_n}^{T}\int   a  (\eta_{n-1}u_{n-1}^{p/2})^q\;dxdt\\
 \le & 4^{n-1} C(R,p)\left(\frac{1}{T} + 1\right)\left(\frac{1}{M}\right)^{\frac{p(q-2)}{2}-1} \int_{T_n}^{T}\int   a  (\eta_{n-1}u_{n-1}^{p/2})^q\;dxdt.
 \end{align*}
 This completes the proof.
 
 
\end{proof}

\begin{prop}
Let $T>0$ and $R>0$. Given any $s>1$ there exists a constant that only depends on $s$, $R$, the mass and second moment of $u$ (hence on $T$) such that 
$$
\sup_{(T/4,T) \times B_{R/2}} u(x,t) \le c_0(s,R,T)  \left(\frac{1}{T}+1\right)^{s}.
$$

\end{prop}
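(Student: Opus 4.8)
The plan is a De Giorgi--Stampacchia level-set iteration powered by the energy estimate of Proposition \ref{prop_iter} and the weighted Sobolev inequality of Lemma \ref{lem:Inequalities Sobolev weight aII}. Fix an integrability exponent $p>3/2$ and $q$ with $2<q<2(1+\tfrac23)$ and $\tfrac{p(q-2)}{2}>1$; such a $q$ exists because $p>3/2$ forces $2+\tfrac2p<\tfrac{10}{3}$. With the sequences $T_n,R_n$, the cut-offs $\eta_n$ and the truncations $u_n=(u-k_n)_+$, $k_n=M\big(1-2^{-n}\big)$ of Proposition \ref{prop_iter} (and its $p$-analogue, obtained from Corollary \ref{cor_iterat} by the same H\"older--Chebyshev manipulations), set
\[
W_n:=\int_{T_n}^{T}\!\!\int a\,\big(\eta_{n-1}u_{n-1}^{p/2}\big)^{q}\,dx\,dt .
\]
Proposition \ref{prop_iter} bounds $\sup_{T_{n+1}\le t\le T}\int\big(\eta_n u_n^{p/2}\big)^2\,dx+\int_{T_{n+1}}^{T}\!\int a\big|\nabla(\eta_n u_n^{p/2})\big|^2\,dx\,dt$ by $C_0W_n$ with $C_0=C^{\,n-1}C(R,p)\big(\tfrac1T+1\big)M^{-(\frac{p(q-2)}{2}-1)}$; inserting $\phi=\eta_n u_n^{p/2}$ and $I=(T_{n+1},T)$ into Lemma \ref{lem:Inequalities Sobolev weight aII} identifies the left-hand side with $W_{n+1}^{2/q}$ up to a constant. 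This produces the superlinear recursion
\[
W_{n+1}\ \le\ \Theta\,\Lambda^{\,n}\,W_n^{\,1+\delta},\qquad
\delta=\tfrac q2-1>0,\quad \Lambda=C^{q/2}>1,\quad \Theta=\Big(C(R,p)\big(\tfrac1T+1\big)M^{-\beta_0}\Big)^{q/2},
\]
with $\beta_0:=\tfrac{p(q-2)}2-1>0$.

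Next I invoke the classical fast-geometric-convergence lemma: $W_n\to0$ provided $W_1\le \Theta^{-1/\delta}\Lambda^{-1/\delta^2}$, which after unwinding $\Theta$ becomes
\[
M\ \ge\ c(R,p,u_0)\,\big(\tfrac1T+1\big)^{1/\beta_0}\,W_1^{\,2\delta/(\beta_0 q)} .
\]
It remains to bound the base term $W_1$, which in any case is $\le\int_{T/4}^{T}\int_{B_R}a\,u^{pq/2}\,dx\,dt$ since the cut-off is $\le\chi_{B_R}$, the time window lies in $(T/4,T)$, and truncation only lowers the integrand. On $(T/4,T)$ the potential is bounded by Lemma \ref{a_inf}, $\|a\|_{L^\infty}\le C(T,u_0)\big(\tfrac1T+1\big)^{\alpha}$ with $\alpha$ as close to $\tfrac13$ as we like; and the gain in integrability, Lemma \ref{gain_n} fed with the $L^{5/3}L^{5/3}$ bound of Lemma \ref{LpLp}, gives $\|u\|_{L^\infty(T/4,T;L^{m}(\mathbb R^3))}^{m}\le C(m,T,u_0)\big(\tfrac1T+1\big)^{m-2/3}$ for $m:=pq/2$. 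Hence $W_1\le C(T,u_0,R,p,q)\big(\tfrac1T+1\big)^{\sigma}$ with $\sigma=\alpha+m-\tfrac23$, and the smallness condition is satisfied by taking $M=c_0(s,R,T)\big(\tfrac1T+1\big)^{s}$ with $s:=\tfrac{2\sigma\delta}{\beta_0 q}+\tfrac1{\beta_0}$.

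With $W_n\to0$ in hand, I pass to the limit along $T_n\downarrow T/2$, $R_n\downarrow R/2$, $k_n\uparrow M$, so that $\eta_{n-1}u_{n-1}^{p/2}\to \chi_{B_{R/2}}(u-M)_+^{p/2}$ a.e. on $(T/2,T)\times\mathbb R^3$; Fatou's lemma then yields $\int_{T/2}^{T}\int_{B_{R/2}}a\,(u-M)_+^{pq/2}=0$, and since $a>0$ everywhere by \eqref{a.lb} this forces $u\le M$ a.e. on $(T/2,T)\times B_{R/2}$. Applying the same argument with $T$ replaced by $T/2$ covers $(T/4,T/2)\times B_{R/2}$ at the cost of an extra factor $2^{s}$ absorbed into $c_0$, so $u\le M$ on all of $(T/4,T)\times B_{R/2}$ (equivalently, one runs the iteration with $T_n=\tfrac T4(1+2^{-n})$ from the start). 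This proves the estimate with the particular power $s$ above.

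Finally, to reach an \emph{arbitrary} $s>1$ one tunes $p$ and $q$: keeping $q\in(2,\tfrac{10}{3})$ fixed and letting $p\to\infty$ (with $p>2/(q-2)$ so that $\beta_0>0$), one has $\sigma=pq/2+O(1)$ and $\beta_0=p(q-2)/2-1\to\infty$, whence $\tfrac{2\sigma\delta}{\beta_0 q}\to1$ and $\tfrac1{\beta_0}\to0$, i.e. $s\to1^{+}$; thus for any prescribed $s>1$ a large enough $p$ makes the output power at most $s$. The heart of the matter is precisely this last step: carrying the powers of $M$ and of $\big(\tfrac1T+1\big)$ faithfully through the recursion and the estimate of $W_1$, so as to see that the De Giorgi scheme --- run at high integrability --- yields a decay rate arbitrarily close to the expected $1/t$, while checking all along that the admissible ranges $2<q<\tfrac{10}{3}$, $p>2/(q-2)$ and $p>1$ remain compatible.
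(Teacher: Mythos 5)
Your proposal follows the paper's proof essentially verbatim: the same De Giorgi iteration driven by Proposition \ref{prop_iter} and the weighted Sobolev inequality of Lemma \ref{lem:Inequalities Sobolev weight aII}, the same smallness criterion for the base step obtained from Lemmas \ref{gain_n}, \ref{LpLp} and \ref{a_inf}, and the same tuning of the integrability exponent to push the decay power toward $1$ (the paper writes ``$\alpha(n)\to 0$'' but, as your computation $s\to 1^+$ shows, the limit is in fact $1$, consistent with the theorem's advertised rate $\approx 1/t$). Two cosmetic corrections: the paper's time cut-offs satisfy $T_n\uparrow T/2$ rather than $T_n\downarrow T/2$, and your alternative sequence $T_n=\tfrac T4(1+2^{-n})$ is decreasing so cannot be used directly in Corollary \ref{cor_iterat}, but your first fix (rerunning with $T$ replaced by $T/2$) handles the $(T/4,T)$ range correctly.
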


\begin{proof}
Lemma \ref{lem:Inequalities Sobolev weight aII} for $\phi =\eta_n u_n^{p/2}$ implies 
\begin{align}\label{Sob_a}
\left( \int_{T_{n+1}}^T \int a (\eta_n u_n^{p/2})^q\;dxdt\right)^{2/q}\le   & \sup \limits_{T_{n+1} \leq t\leq T} \left \{ \int (\eta_n u_n^{p/2})^2\;dx\right\} \\
&\;+ \frac{(p-1)}{4p}  \int_{T_{n+1}}^{T}\int a|\nabla (\eta_n u_n^{p/2})|^2\;dxdt. \nonumber 
\end{align}
   Then Proposition \ref{prop_iter} says that 
   
   \begin{align*}
   \sup \limits_{T_{n+1} \leq t\leq T} \left \{ \int (\eta_n u_n^{p/2})^2\;dx\right\} &+ \frac{(p-1)}{4p}  \int_{T_{n+1}}^{T}\int a|\nabla (\eta_n u_n^{p/2})|^2\;dxdt\\
      \le & \;U_n \le   C_{n,p,T,M} \int_{T_n}^{T}\int   a  (\eta_{n-1}u_{n-1}^{p/2})^q\;dxdt\\
      \le &  C_{n,p,T,M}\left(\sup \limits_{T_{n} \leq t\leq T} \left \{ \int (\eta_{n-1} u_{n-1}^{p/2})^2\;dx\right\} + \frac{(p-1)}{4p}  \int_{T_{n}}^{T}\int a|\nabla (\eta_{n-1} u_{n-1}^{p/2})|^2\;dxdt \right)^{\frac{q}{2}} \\
      \le &\; C_{n,p,T,M}\; U_{n-1}^{\frac{q}{2}}, 
 \end{align*}
 with  
 $$
 C_{n,p,T,M}:=4^{n-1}\underbrace{C(p, R)\left(\frac{1}{T} +1\right)\left(\frac{1}{M}\right)^{\frac{p(q-2)}{2}-1}}_{ :=C_{p,R,T,M}}.
 $$
 This leads to a recurrence relation 
 $$
 U_n\le 4^{n-1}C_{p,R,T,M}U_{n-1}^{\frac{q}{2}}.
   $$
   A standard induction argument shows that the above recurrence relation yields
   \begin{align}\label{end_sum}
   \lim_{n\to +\infty} U_n =0,
   \end{align}
provided the initial step 
\begin{align*}
U_0:= &\left(\frac{1}{T}+C(\varepsilon, p)\right) \int_{T_0}^{T} \int \eta_0^2u^p+  a  \eta_0^2 u^p  \;dxdt, \quad T_0=T/4,\\
\end{align*}   
is small enough. For completeness we sketch this last argument: assume for a certain $n\ge 0$ 
\begin{align}\label{ind_n}
4^n U_n^{\frac{q}{2}-1}\le \frac{1}{C_{p,R,T,M} (8)^{\frac{1}{\frac{q}{2}-1}}},
\end{align}
we show that the same is true for $n+1$: using (\ref{ind_n}) we get 
\begin{align*}
4^{n+1} U_{n+1}^{\frac{q}{2}-1}& \le  4^{n+1} \left( 4^{n}C_{p,R,T,M}U_{n}^{\frac{q}{2}}\right)^{\frac{q}{2}-1} \le 4 C_{p,R,T,M}^{\frac{q}{2}-1}  \left( C^{n}U_{n}^{\frac{q}{2}-1} \right)^{\frac{q}{2}} \\
&\le 4 C_{p,R,T,M}^{\frac{q}{2}-1}  \left( \frac{1}{C_{p,R,T,M} (2C)^{\frac{1}{\frac{q}{2}-1}}}\right)^{\frac{q}{2}} \\
&\le  C_{p,R,T,M}^{-1} \frac{4}{(8)^{\frac{\frac{q}{2}}{\frac{q}{2}-1}}} \le \frac{1}{C_{p,R,T,M} (8)^{\frac{1}{\frac{q}{2}-1}}}.
\end{align*}
Therefore if (\ref{ind_n}) holds for $U_0$, i.e.
\begin{align}\label{init_cond_small}
U_0^{\frac{q}{2}-1}\le \frac{1}{C_{p,R,T,M} (8)^{\frac{1}{\frac{q}{2}-1}}},
\end{align}
then 
$$
\lim_{n\to +\infty} U_{n+1}^{\frac{q}{2}-1} \le \lim_{n\to +\infty} \frac{c}{4^{n}}  =0,
$$
and (\ref{end_sum}) is proven. 

We are left to prove that for $M$ big enough the condition (\ref{init_cond_small}) is satisfied. Let $p = 5/3 + n$ with $n$ any positive integer. Inequalities (\ref{u_inf_n}) and (\ref{a_inf_n}) imply
\begin{align*}
U_0 & \le c(n)\left(\frac{1}{T}+1\right) \int_{T/4}^{T} \int u^{5/3+n}+  a u^{5/3+n}  \;dxdt \\
& \le   c(n) \left(\frac{1}{T}+1\right) \left( \|a\|_{L^\infty((T/4,T)\times \mathbb{R}^3)} +1\right) \int_{T/4}^T \int u^{5/3+n} \;dxdt\\
& \le  c(n,u_0,T) \left(\frac{1}{T}+1\right)^{1+ \frac{n+1}{3n+2} + n+1}  = c(n,u_0,T) \left(\frac{1}{T}+1\right)^{ \frac{7n+5}{3n+2} + n}.
\end{align*}
We chose $M$ big enough so that 
$$
c(n) \left(\frac{1}{T}+1\right)^{ \left(\frac{7n+5}{3n+2} + n\right)\left(\frac{q}{2}-1\right)}\left(\frac{1}{T} +1\right)\left(\frac{1}{M}\right)^{\frac{(5/3+n)(q-2)}{2}-1} \le \frac{1}{8^{\frac{1}{\frac{q}{2}-1}}}.
 $$
 or equivalently 
 $$
 M> c(n) \left(\frac{1}{T}+1\right) ^{\alpha(n)}
 $$
 with 
 $$
\alpha(n) = \frac{ \left(\frac{7n+5}{3n+2} + n\right)\left(\frac{q}{2}-1\right)}{{(5/3+n)(\frac{q}{2}-1)}-1}.
 $$
 Note that $\alpha(n) \ge 0$ for each $n\ge 0$ and $\alpha(n)\to 0$ as $n\to +\infty$. Therefore given any $s>1$ there exists an integer $n$ such that $\alpha(n)<s$ and this concludes the proof.

    
\end{proof}

\end{document}